\newcounter{obs}
\newtheorem{theorem}{Theorem}[section]
\newtheorem{proposition}[theorem]{Proposition}
\newtheorem{corollary}[theorem]{Corollary}
\newtheorem{lemma}[theorem]{Lemma}
\theoremstyle{definition}
\title{}
\title[Pietsch-Maurey-Rosenthal factorization]{Pietsch-Maurey-Rosenthal
factorization of summing multilinear operators}
\author[Mieczys{\l}aw~Masty{\l}o and Enrique \,A.~S\'{a}nchez P\'{e}rez]
{Mieczys{\l}aw~Masty{\l}o and Enrique \,A.~S\'{a}nchez P\'{e}rez}
\dedicatory{To the memory of Pawe{\l} Doma\'nski}
\address{M.~Masty{\l}o\\
Faculty of Mathematics and Computer Science\\
Adam Mickiewicz University in Pozna\'n\\
Umultowska 87, 61-614 Pozna\'n, Poland}
\email{mastylo@amu.edu.pl}
\address{E.\,A.~S\'{a}nchez P\'{e}rez\\
Instituto Universitario de Matem\'atica Pura y Aplicada,
Universitat Polit\`ecnica de Val\`encia\\ Camino de Vera s/n, 46022
Valencia, Spain.}
\email{easancpe@mat.upv.es}
\subjclass[2010]{Primary 46E30, Secondary 47B38, 46B42}
\keywords{Extension, summing multilinear operator, factorization, $p$-convex, Banach lattice.}
\thanks{The first named author was supported by National Science Center, Poland, project
no. 2015/17/B/ST1/00064.  The second named author was supported by the Ministerio de Econom\'{\i}a
y Competitividad (Spain) under project MTM2016-77054-C2-1-P}
\date{}
\begin{document}

\maketitle

\begin{abstract}
The main purpose of this paper is the study of a~new class of
summing multilinear operators acting from the product of Banach
lattices with some nontrivial lattice convexity.~A~mixed
Pietsch-Maurey-Rosenthal type factorization theorem for these
operators is proved under weaker convexity requirements than the
ones that are needed in the Maurey-Rosenthal factorization through
products of $L^q$-spaces.~A~by-product of our factorization is an
extension of multilinear operators defined by a~$q$-concavity type
property to a~product of special Banach function lattices which
inherit some lattice-geometric properties of the domain spaces, as
order continuity and $p$-convexity. Factorization through
Fremlin's tensor products is also analyzed.~Applications are
presented to study a~special class of linear operators between
Banach function lattices that can be characterized by a strong
version of $q$-concavity. This class contains $q$-dominated
operators, and so the obtained results provide a~new factorization
theorem for operators from this class.
\end{abstract}

\vspace{7 mm}

\section{Introduction}

Domination inequalities for multilinear operators are of interest
in applications  to factorization of various types of operators
(see \cite{BoPeRuJMAA,defmas,defmaspoly,jca}). In the case of
operators defined on products of Banach lattices, these
dominations are  deeply related to Banach lattice geometric
notions, as $q$-convexity or $q$-concavity.  It should be pointed
out that domination does not lead in general to a~nice
factorization in the multilinear case. However, in some situations
the relation between domination and factorization works as in the
linear case. We recall that the famous Pietsch's
factorization theorem is given by a~domination result associated
to summability properties also in the multilinear case, in which
$L^p$-spaces are involved. We also point out that under the
assumption of some variants of convexity properties of the
involved lattices, the Maurey-Rosenthal multilinear theorem allows
to link a~$q$-concavity type domination inequality  with a~
factorization/extension of the multilinear operator.

In this paper we are concerned with the analysis of some new
lattice geometric properties that we call $p$-strong $q$-concavity
(see Section \ref{Sla2}). The motivation for this is to prove
domination/factorization characterizations for multilinear
operators from the product of Banach lattices that satisfy
a~certain vector norm inequality. Recall that in the case of
linear operators acting in Banach lattices, if an operator $T\colon X
\to Y$ is $q$-summing then it is also $q$-concave. This is the
main lattice-type property that is normally used when a
summability property for an operator among Banach lattices is
considered. Indeed, this implies ---using the Maurey-Rosenthal
factorization and under the assumption of $q$-convexity of the
domain lattice---, that the operator factors through an
$L^q$-space. Now take an index $1 \le p < q$ and write $r$ for the
real number satisfying that $1/q+1/r=1/p$. Then we can easily see
that
\[
\sup_{x^* \in B_{X^*}} \Big(\sum_{k=1}^{n} | \langle x_{k}, x^* \rangle |^q \Big)^{1/q} \le \sup_{x^* \in B_{X^*}}
\sup_{(\beta_k) \in B_{\ell^r}}  \Big(\sum_{k=1}^{n} | \beta_k \langle x_k, x^* \rangle |^p \Big)^{1/p}
\]
\[
\le
\sup_{(\beta_k) \in B_{\ell^r}} \Big\|  \Big(\sum_{k=1}^{n} | \beta_k  x_k  |^p \Big)^{1/p} \Big\|_X
\le
\Big\Vert\Big(\sum_{k=1}^n |x_k|^q\Big)^{1/q}\,\Big\Vert_X
\]
for every finite sequence $(x_k)_{k=1}^n$ in  the Banach lattice $X$.
A look to the definitions (see Section \ref{Sla2}) shows  that the implications
$$
\textit{p-summing } \Rightarrow   \textit{$p$-strongly $q$-concave}  \Rightarrow  \textit{$q$-concave}
$$
hold for operators acting in Banach lattices.

The main  advantage in using this new lattice property
---$p$-strong $q$-concavity--- is that \emph{the requirement on
the $q$-convexity of the original space can be relaxed} and still
obtain an standard factorization theorem. Indeed, Maurey-Rosenthal
theorem implies that $q$-summability of the operator \textit{plus}
$q$-convexity of the domain space allows a~strong factorization
through an $L^q$-space.  In the preliminary paper \cite{elprim},
it is shown that for  $1 \le p < q$, every $p$-strongly
$q$-concave operator ---and so every $q$-summing operator---
acting in a $p$-convex  space factors strongly through a Banach
function lattice space of the new class $ S_{X_p}^q(\xi)$, that
admit an easy description and whose lattice properties are
naturally associated to $p$-strongly $q$-concave operators. The
aim of this paper is to draw  the complete picture for this class
of lattice dominations/factorizations of operators by analyzing
their multilinear variants. By applying them to the linear case we
will show new factorization theorems for the classical
$q$-dominated (linear) operators among Banach lattices.

In Section 2 we sketch some background from the theory of general
Banach lattices, Fremlin's tensor product of Banach lattices, and
also summing operators. We also provide examples which motivates
our study.

In Section 3 we study a new class of summing multilinear operators
acting from the product of Banach lattices nontrivial lattice
convexity. We prove an extension theorem for these operators
acting in products of Banach lattices with some nontrivial
convexity. We give a mixed Pietsch-Maurey-Rosenthal type
factorization theorem for the multilinear case. We show that
a~particular class of multilinear operators defined by a
$q$-concavity type property can be extended to a~product of Banach
lattice satisfying some lattice-geometric properties, as order
continuity and $p$-convexity.

In section 4 we show the relation among summability of multilinear
operators from suitable products of Banach function lattices and
Fremlin tensor product. Factorization theorems are proved.

In Section 5 we center our attention on the factorization of the
linear dominated operators associated to a~new geometric
definition introduced in the paper. Indeed, the lattice-geometric
domination inequality appearing in the definition of the
$p$-strongly $q$-concave operators motivates the definition of the
dual notion.

\vspace{2 mm}

\section{Notation and background material} \label{Sla2}

The purpose of this section is to sketch some background from the
theory of general Banach lattices and summing operators. We shall
also take the opportunity to establish some
notation. For a given dual pair $\langle X, Y, (\cdot, \cdot)
\rangle$ the evaluation map $(x, y)$ is denoted by $\langle x,
y\rangle$ for all $x\in X$, $y\in Y$.

For notations concerning vector lattices we follow \cite{AB,LT},
and tensor products of Banach lattices we follow \cite{Fremlin1,Fremlin2}.
Let $(E, \leq)$ be a~vector lattice (called also a~Riesz space). If $A\subset E$,
then $A^{+}:=\{x\in A; \, x\geq 0\}$. Let us recall, that if $A$ is a~subset of a~Banach lattice
$E$, then a~functional $x^{*}\in E^{*}$ satisfying the condition $\langle x, x^{*} \rangle >0$
whenever $0<x \in A$ is called strictly positive on $A$. It is known that strictly positive
functionals on $E$ exist when $E$ has the order continuous norm and a~weak unit (see
\cite[Theorem 12.43]{AB} or \cite[Proposition 1.b.15]{LT}).

We also recall that a~Banach lattice possessing order continuous
norm and a~weak unit is order isomorphic to a~Banach function
lattice on a finite measure space.

We recall that if $(\Omega,\Sigma,\mu)$ is
a~\textit{$\sigma$-finite} measure space $L^0(\mu)$ denotes the
space of  $\mu$-a.e equal equivalence classes of functions.
A~\emph{Banach function lattice} is a~Banach space  $X\subset L^0(
\mu)$ with norm $\Vert\cdot\Vert_{X}$ such that if $f\in
L^0(\mu)$, $g \in X$ and $|f|\le|g|$ $\mu$-a.e.\ then $f\in X$ and
$\Vert f\Vert_X\le\Vert g\Vert_X$. Every Banach function space is
a Banach lattice with the pointwise $\mu$-a.e.\ order. The K\"othe
dual $X'$ of a Banach function space $X$ is the subspace of the
dual space $X^*$ of the functionals that has an integral
representation, that is, $x^{*} \in X^*$ for which there exists
$x' \in L^0(\mu)$ such that
\[
\langle x, x^{*}\rangle = \int_{\Omega} x x'\,d\mu, \quad\, x\in
X.
\]
In what follows we consider a dual pair $\langle X, X' \rangle$
with the evaluation map $(x, x') \mapsto \langle x, x'\rangle
:=\int_{\Omega} x x'\,d\mu$ for all $(x, x') \in X\times X'$.

A~Banach function lattice is said to have the \emph{Fatou
property} if for every sequence $(f_n)$ in $X$ such that $0\le
f_n\uparrow f$ a.e.\ and $\sup_n\Vert f_n\Vert_X<\infty$, it
follows that $f\in X$ and $\Vert f_n\Vert_X\uparrow\Vert
f\Vert_X$. This is equivalent to the fact that $X=X''$ with
equality of norms.

We use $\mathcal{M}(K)$ to denote the space of regular Borel
probability spaces on a~compact Hausdorff space. We recall that
the weak$^{*}$ topology on the dual $E^*$ a~Banach space $E$ is the topology
pointwise convergence. Then the unit ball is compact, by the
Banach-Alaoglu theorem.

The normed space $(\mathbb{R}^n, \|\cdot\|_p)$ is denoted by
$\ell_p^n$ for $1\leq p\leq \infty$, where as usual for any
$x=(t_1, \ldots, t_n) \in \mathbb{R}^n$,
\[
\|x\|_p = \Big(\sum_{k=1}^n |t_k|^p\Big)^{1/p},
\]
and
\[
\|x\|_{\infty} = \max_{1\leq k \leq n} |t_k|.
\]
In what follows the unit ball $B_{\ell_p^n}$ is denoted by
$B_{p}^n$ for short.

Given a~Banach lattice $X$, a~Banach space $Y$, and numbers $1\leq
p$, $q <\infty$. An operator $T\colon X \to Y$ is said to be
\emph{$q$-concave} if there exists  $C_{(q)} >0$ such that
\[
\Big(\sum_{k=1}^{n} \|Tx_k\|_{Y}^q \Big)^{1/q} \le C_{(q)}
\Big\Vert\Big(\sum_{k=1}^n |x_k|^q\Big)^{1/q}\,\Big\Vert_X
\]
for every choice of elements $x_1,..., x_n $ in $X$. The infimum of the
values $C_{(q)} $ for which the inequality above is satisfied will be
denoted by $M_{(q)} (T)$.

A~Banach lattice $X$ is said to be $p$-\emph{convex}, $1\leq p
<\infty$, respectively $q$-\emph{concave}, $1 \leq q < \infty$, if
there are positive constants $C^{(p)}$ and $C_{(q)}$ such that
\[
\Big\Vert\Big(\sum_{k=1}^n |x_k|^p\Big)^{1/p}\,\Big\Vert_X \le
C^{(p)} \Big(\sum_{k=1}^n\Vert x_k \Vert_X^p\Big)^{1/p},
\]
respectively,
\[
\Big(\sum_{k=1}^n \|x_k\|_{X}^q \Big)^{1/q} \le C_{(q)}
\Big\|\Big(\sum_{k=1}^n |x_k|^q\Big)^{1/q}\Big\|_{X}
\]
for every finite sequence $(x_k)_{k=1}^n$ in $X$. The least such
$C^{(p)}$ (respectively, $C_{(q)})$) is denoted by $M^{(p)}(X)$
(respectively, $M_{(q)}(X)$).~It is well-known that a $p$-convex
Banach ($q$-concave) lattice can always be renormed with a lattice
norm in such a way that $M^{(p)}(X)=1$ ($M_{(q)}(X)=1$).
\emph{Henceforth, throughout the paper we shall always assume that
$M^{(p)}(X)=1$}. We refer to \cite[Ch.\,1.d]{LT} or
\cite[Ch.2]{libro} for information about the classical geometric
concepts of (lattice) $p$-convexity and $q$-concavity.

If a~Banach lattice $X$ is $p$-convex with $1\leq p<\infty$, then
its $p$-\emph{concavification} is a~Banach lattice $X_p$ (see
\cite[p.~54]{LT} for details). Note that in the case of a Banach
function lattice $X$ on $(\Omega, \Sigma, \mu)$, $X_p$ is
identified with the space of all $f\in L^0(\mu)$ so that
$|f|^{1/p}\in X$ and equipped with the norm $\|f\|_{X_p} =
\||f|^{1/p}\|_{X}^p$.

We will use Fremlin tensor products of Banach lattices. Let
$X_1,\ldots, X_n$ and $Y$ be Archimedean Riesz spaces. An
$n$-linear map
\[
B\colon X_1\times \cdot\cdot\cdot \times X_n \to Y
\]
is called positive if $B(x_1,\ldots, x_n) \in Y^{+}$ whenever $x_k
\in X_k^{+}$, $1\leq k\leq n$; it is called a Riesz $n$-morphism
if $B(|x_1|, \ldots, |x_n|) = |B(x_1,\ldots, x_n)|$ for all
$x_k\in X_k$, $1\leq k\leq n$.

Following \cite{Fremlin2} (see also \cite{Schep}) one can
construct an Archimedean Riesz space $X_1
\bar{\otimes}\cdot\cdot\cdot \bar{\otimes}X_n$ and a~Riesz
morphism (called the \emph{Fremlin map}) $\bigotimes$.

We recall fundamental properties of this construction;

(a) $X_1 \otimes\cdot\cdot\cdot \otimes X_n$ is dense in $X_1
\bar{\otimes}\cdot\cdot\cdot \bar{\otimes} X_n$, i.e., for any $u
\in X_1 \bar{\otimes}\cdot\cdot\cdot \bar{\otimes} X_n$ there
exist $x_k\in X_{k}^{+}$ ($1\leq k\leq n$) such that for all
$\varepsilon > 0$ there is a~$v\in X_1
\bar{\otimes}\cdot\cdot\cdot \bar{\otimes} X_n$ with $|u- v| \leq
\varepsilon (x_1 \otimes\cdot\cdot\cdot \otimes x_k$).

(b) If $u \in  X_1 \bar{\otimes}\cdot\cdot\cdot \bar{\otimes}
X_n$, then there exist $x_k\in X_{k}^{+}$ ($1\leq k \leq n$) such
that $|u| \leq x_1 \otimes\cdot\cdot\cdot \otimes x_n$.

If $X_1, \ldots, X_n$ are Banach lattices, then we can define the
positive-projective norm $\|\cdot\|_{|\pi|}$ on $X_1
\bar{\otimes}\cdot\cdot\cdot \bar{\otimes} X_n$ by
\[
\|x\|_{|\pi|} = \inf \bigg\{\sum_{i=1}^n \sum_{j=1}^m \|x_{i,
j}\|_{X_j}; \, x_{i, j} \in X_{j}^{+}, \,\, |x| \leq \sum_{i=1}^n
x_{i, 1}\otimes\cdot\cdot\cdot \otimes x_{i, m}\bigg\}.
\]
We define the \emph{Fremlin tensor product} to be the Banach lattice
$X_1\otimes_{|\pi|}\cdot\cdot\cdot \otimes_{|\pi|} X_m$ given by the
completion of $X_1 \bar{\otimes}\cdot\cdot\cdot
\bar{\otimes} X_n$ with respect to $\|\cdot\|_{|\pi|}$.

We note that in the case of Banach function lattices $X_1,\ldots,
X_m$ on measure spaces $(\Omega_1, \Sigma_1, \mu_1),\ldots,
(\Omega_n, \Sigma_n, \mu_n)$, respectively, we can define the
Riesz space $X_1 \bar{\otimes}\ldots \bar{\otimes} X_n$ generated
by
\[
\{x_1\odot\cdot\cdot\cdot\odot x_{n}; \, x_j \in X_j, \, 1\leq
j\leq n\}
\]
in $L^0(\mu_1\times \cdot\cdot\cdot \times \mu_n)$, where
\[
(x_1\odot\cdot\cdot\cdot \odot x_{n})(\omega_1,\ldots, \omega_n)
:= x_1(\omega_1)\cdot\cdot\cdot x_n(\omega_n)
\]
for all $(x_1, \ldots, x_n) \in X_1 \times \cdots \times X_n$ and
$(\omega_1,\ldots, \omega_n) \in \Omega_1 \times \cdots\times
\Omega_n$.

Let us introduce now the notion that motivates the multilinear
definition given in this paper. Let $1\le p\le q<\infty$. Consider
$T\colon X\to E$ a~linear operator from a Banach lattice $X$ into
a~Banach space $E$. We will say that $T$ is \emph{$p$-strongly
$q$-concave} if there exists $C>0$ such that
\[
\Big(\sum_{k=1}^n\Vert Tx_{k}\Vert_E^q\Big)^{1/q}\le C
\sup_{(\beta_k) \in B_{r}^n}\Big\Vert\Big(\sum_{k=1}^n |\beta_k
x_k|^p\Big)^{1/p}\,\Big\Vert_X
\]
for every finite sequence $(x_k)_{k=1}^n$ in $X$, where $1<r\le\infty$
is such that $1/r = 1/p - 1/q$.

We present some examples showing the nature  of \textit{linear}
$p$-strongly $q$-concave operators. The reader can find more
examples in \cite{elprim}.

Fix $1\leq p <q <\infty$ and let $1/r = 1/p - 1/q$. Clearly that
$r/p$ and $q/p$ are conjugate exponents, that is, $1/(r/p) +
1/(q/p) = 1$. Since
\[
\sup_{(\beta_k) \in B_{r}^n} \Big\Vert\Big(\sum_{k=1}^n |\beta_k
x_k|^p\Big)^{1/p}\,\Big\Vert_X \le \Big\Vert\Big(\sum_{k=1}^n
|x_k|^q\Big)^{1/q}\,\Big\Vert_{X},
\]
it follows that a $p$-strongly $q$-concave operator is always $q$-concave.

Now observe that if $1\leq p\leq q<\infty$ and $X$ is a $p$-concave
Banach lattice, then the identity map $\iota\colon X\to X$ is
$p$-strongly $q$-concave. To see this we fix a~finite sequence
$(x_k)_{k=1}^n$ in a~$p$-concave
Banach function lattice $X$. Without loss of generality we may assume
that $M_{(p)}(X)=1$. Let $\alpha_k = \|x_k\|^{q/r}/( \sum_{k=1}^n \|x_k\|^q)^{1/r}$
for each $1\leq k \leq n$. Since $q= (pq/r) + p$, $\sum_{k=1}^n
\alpha_k^r =1$ and so
\begin{align*}
\Big(\sum_{k=1}^n \big\|x_k \big\|^q_X  \Big)^{1/q} & = \Big(
\sum_{k=1}^n \big\|x_k \big\|^{pq/r} \cdot \|x_k\|^p  \Big)^{1/q}
\le \Big\| \Big( \sum_{k=1}^n \big(|x_k| \|x_k\|^{q/r} \big)^p
\Big)^{1/p} \Big\|^{p/q} \\
& \le \sup_{(\beta_k) \in B_{r}^n} \Big\|\Big( \sum_{k=1}^n
|\beta_k \, x_k|^p \Big)^{1/p} \Big\|^{p/q} \Big( \sum_{k=1}^n
\|x_k\|^{q} \Big)^{p/(rq)}.
\end{align*}
Hence
\[
\Big(\sum_{k=1}^n \big\|x_k \big\|^q_X  \Big)^{p/q^2} = \Big(
\sum_{k=1}^n \big\|x_k \big\|^q_X  \Big)^{1/q-p/(rq)} \le
\sup_{(\beta_k) \in B_{r}^n} \Big\| \Big( \sum_{k=1}^n|\beta_k
\,x_k|^p \Big)^{1/p} \Big\|^{p/q},
\]
and this gives the above mentioned statement.

We note that the above observation shows that all $L^p$-spaces are
$p$-strongly $q$-concave, thus an operator acting in $L^p$-space
is so. However, there are of course other situations.

We show an example of a $p$-strongly $q$-concave operator acting
in a~$p$-convex Banach function lattice that is not an $L^p$
space. To see this we need to define special spaces and show some
preliminary results.

Let $1<p<q <\infty$ and let $p'$ be the conjugate number given by
$1/p'=1 -1/p$. Assume that $(\Omega,\Sigma,\mu)$ is
a~$\sigma$-finite measure space such that there is
a~measurable partition $(A_k)_{k=1}^\infty$ of $\Omega$ with
$\mu(A_k)=1$ for each $k$. Consider the sequence of characteristic
functions $(\chi_{A_k})_k$ and define an order continuous Banach
function lattice
\[
Y:= \Big\{f\in L^0(\mu); \, (f\xi_{A_k})_{k=1}^{\infty} \in
\big(\oplus L^p(\mu|_{A_k}  \big) \big)_{\ell_q} \Big\}
\]
equipped with the norm
\[
\|f\|_Y := \|(f\xi_{A_k})\|_{(\oplus L^{p}(\mu|_{A_k}))_{\ell_q}}=
\Big( \sum_{k=1}^\infty \Big( \int_{A_k} |f|^p \, d\mu \Big)^{q/p}
\Big)^{1/q}, \quad f \in Y.
\]
It is easy to check that
\[
\|\chi_{A_k}\|_{(Y_p)'} = 1, \quad\, k\in \mathbb{N}.
\]
This implies that for each $k\in \mathbb{N}$ we have a functional $x_{k}^{*}\in
B_{(Y_p)^{*}}$ given by
\[
x_{k}^{*}(f) = \int_{A_k} f\,d\mu, \quad\, f\in Y_{p}.
\]
Now observe that the linear map $R$ defined by
\[
R(f) = \Big( \frac{1}{2^{k/q}} \int_{A_k} f \, d\mu \Big)_k, \,
\quad\, f\in Y
\]
is bounded from $Y$ to $\ell_q$.

For the Borel regular measure on $B_{(Y_p)^{*}}$ given by $\nu =
\sum_{k=1}^\infty 2^{-k}\,\delta_{x_{k}^{*}}$, we denote by
$S^q_{Y_p}(\nu)$ the space of all $f\in L^0(\mu)$ such that
\[
\|f\|_{p,q; \nu} :=\Big(  \int_{B_{(Y_p)^{*}}^+} \big| \langle
|f|^p, y^{*} \rangle \big|^{q/p} \, d \nu(y^{*})\Big)^{1/q}
<\infty.
\]
A direct computation shows that
\[
\|f\|_{p, q, \nu} = \Big( \sum_{k=1}^\infty \frac{1}{2^k} \Big(
\int_{A_k} |f|^p \, d\mu \Big)^{q/p} \Big)^{1/q}, \quad f \in Y.
\]
Therefore, $\|\cdot\|_{p,q,\nu} \le \|\cdot\|_Y$ and the operator
$R$ can be extended to the space $S^{q}_{Y_p}(\nu)$, since
\[
\|R(f)\|_{\ell^q} = \Big\| \Big( \frac{1}{2^{k/q}} \int_{A_k} f \,
d\mu \Big)_k \Big\|_{\ell^q} \le \Big( \sum_{k=1}^\infty
\frac{1}{2^k} \Big( \int_{A_k} |f|^p \, d\mu \Big)^{q/p} \,
\mu(A_k)^{q/p'} \Big)^{1/q} = \|f\|_{Y}.
\]
Now observe that for any $f_1,\ldots, f_n \in Y$ we have
\begin{align*}
\sum_{k=1}^n \|T(f_k)\|_{\ell^q}^q  & \le \int_{B_{(Y_p)^{*}}^+}
\Big(\sum_{k=1}^n \big| \langle
|f_k|^p, y^{*} \rangle \big|^{q/p} \Big)\,d\nu(y^{*}) \\
& \le   \sup_{y^{*} \in {B_{(Y_p)^*}^+}} \sum_{k=1}^n
\big| \langle |f_k|^p, y^{*} \rangle \big|^{q/p} \\
& = \sup_{(\beta_k) \in B_{r}^n} \Big\Vert\Big(\sum_{k=1}^n
|\beta_k f_k|^p\Big)^{1/p}\,\Big\Vert_{Y}^q,
\end{align*}
where we have used Lemma \ref{S-space} (see Section \ref{S3} below).

\section{Summing multilinear operators on products of Banach lattices} \label{S3}

In what follows we assume that the $m$-tuples $(p_1,\ldots, p_m)$,
$(q_1, \ldots, q_m)$ and $(r_1,\ldots, r_m)$ of real numbers
satisfy $1\leq p_j \leq q_j$, $1/r_j = 1/p_j - 1/q_j$ for each
$1\leq j\leq m$. We also define $q$ by $1/q:= 1/q_1 + \ldots +
1/q_m$.

A~multilinear operator $T\colon X_1 \times \cdots \times X_m \to Y$ ---where $X_j$ is
a~Banach lattice and $Y$ is a~Banach space---,  is said to be $(p_1,
\ldots, p_m)$-strongly $(q_1,\ldots, q_m)$-concave whenever there
exists a~constant $C>0$ such that for any finite sequence
$(x^j_{k})_{k=1}^n$ in $X_j$, $1\leq j\leq m$, we have that
\[
\Big( \sum_{k=1}^n \| T(x^1_k, \ldots, x^m_k) \|_{Y}^q \Big)^{1/q}
\le C \prod_{j=1}^{m} \sup_{(\beta^{j}_k) \in B_{\ell^{r_j}}}
\Big\Vert\Big(\sum_{k=1}^n |\beta^j_k x_k^j |^{p_j}
\Big)^{1/{p_j}}\,\Big\Vert_{X_j}.
\]

\vspace{1.5 mm}

We will use a~lemma which is a~general version of Lemma 2 in
\cite{elprim}.

\begin{lemma}\label{lemaequi1}
Let $1\leq p<\infty$ and let $E$ be a~$p$-convex Banach lattice.
Then
\[
\sup_{(\beta_k) \in B_{r}^n}\Big\Vert\Big(\sum_{k=1}^n |\beta_k
x_k|^p \Big)^{1/p}\,\Big\Vert_{E}= \sup_{x^{*} \in
B_{{(E_p)}^*}^+} \Big(\sum_{k=1}^n \langle |x_k|^{p}, x^{*}
\rangle^{q/p}\,\Big)^{1/q}
\]
for every choice of $(x_k)_{k=1}^n$ in $E$ where $1/r = 1/p -
1/q$.
\end{lemma}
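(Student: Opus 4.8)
The plan is to reduce both sides to statements about the $p$-concavification $E_p$ and then evaluate the resulting supremum by elementary duality. First I would introduce the positive elements $\phi_k := |x_k|^p$, which lie in $E_p$ since $\phi_k^{1/p} = |x_k| \in E$. Using the defining identity $\|g\|_{E_p} = \|g^{1/p}\|_E^p$ for $g \geq 0$, the inner expression on the left-hand side rewrites as
\[
\Big\Vert \Big( \sum_{k=1}^n |\beta_k x_k|^p \Big)^{1/p} \Big\Vert_E = \Big\Vert \sum_{k=1}^n |\beta_k|^p \phi_k \Big\Vert_{E_p}^{1/p}.
\]
Substituting $\gamma_k := |\beta_k|^p$ turns the constraint $(\beta_k) \in B_r^n$ (that is, $\sum_k |\beta_k|^r \leq 1$) into $\gamma_k \geq 0$ with $\sum_k \gamma_k^{r/p} \leq 1$, and this correspondence is onto the positive part of the unit ball of $\ell_{r/p}^n$. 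Since $x \mapsto x^{1/p}$ is increasing, the left-hand side equals
\[
\Big( \sup_{\gamma} \Big\Vert \sum_{k=1}^n \gamma_k \phi_k \Big\Vert_{E_p} \Big)^{1/p},
\]
the supremum being over $\gamma \geq 0$ with $\|\gamma\|_{\ell_{r/p}^n} \leq 1$.

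Next I would linearize the lattice norm. Because each $\sum_k \gamma_k \phi_k$ is a positive element of $E_p$ and positive functionals norm positive elements, one has $\big\|\sum_k \gamma_k \phi_k\big\|_{E_p} = \sup_{x^* \in B_{(E_p)^*}^+} \sum_k \gamma_k \langle \phi_k, x^*\rangle$. The left-hand side (before the $1/p$ power) thus becomes a double supremum over $\gamma$ and over $x^*$, which may be interchanged freely since both are suprema. For a fixed $x^* \geq 0$, setting $a_k := \langle \phi_k, x^*\rangle \geq 0$ reduces the inner problem to
\[
\sup \Big\{ \sum_{k=1}^n \gamma_k a_k : \gamma_k \geq 0, \ \|\gamma\|_{\ell_{r/p}^n} \leq 1 \Big\}.
\]
Since the $a_k$ are nonnegative, this is exactly the dual-norm evaluation $\|(a_k)\|_{\ell_{s}^n}$, where $s = (r/p)'$ is the conjugate exponent of $r/p$; and the arithmetic $1/r = 1/p - 1/q$ gives precisely $s = q/p$. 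Hence the inner supremum equals $\big( \sum_k \langle |x_k|^p, x^*\rangle^{q/p} \big)^{p/q}$.

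Collecting the exponents finishes the argument: the left-hand side equals
\[
\Big[ \sup_{x^* \in B_{(E_p)^*}^+} \Big( \sum_{k=1}^n \langle |x_k|^p, x^*\rangle^{q/p} \Big)^{p/q} \Big]^{1/p} = \sup_{x^* \in B_{(E_p)^*}^+} \Big( \sum_{k=1}^n \langle |x_k|^p, x^*\rangle^{q/p} \Big)^{1/q},
\]
which is the right-hand side. I expect the only genuinely delicate point to be the passage from $\|\cdot\|_{E_p}$ to a supremum of positive functionals: one must ensure the duality is taken in $E_p$ (not $E$), that $\phi_k$ genuinely represents $|x_k|^p$ as an element of $E_p$, and that the supremum over $B_{(E_p)^*}$ can be restricted to its positive part---valid because for $g \geq 0$ one has $\langle g, x^*\rangle \leq \langle g, |x^*|\rangle$ with $\||x^*|\| = \|x^*\|$. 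The boundary case $p = q$ (where $r = \infty$, so $r/p = \infty$ and $q/p = 1$) should be checked separately but follows from the same duality, the optimal $\gamma$ being the vector with all entries equal to one by monotonicity of the lattice norm.
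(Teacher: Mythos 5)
Your proposal is correct and follows essentially the same route as the paper's proof: pass to the $p$-concavification via $\|g\|_{E_p}=\|g^{1/p}\|_E^p$, represent the norm of the positive element as a supremum over $B_{(E_p)^*}^+$, interchange the two suprema, substitute $|\beta_k|^p$ into the positive part of $B_{r/p}^n$, and evaluate the inner supremum by the duality $(\ell^{r/p})'=\ell^{q/p}$. The only differences are cosmetic (you perform the substitution before linearizing and are slightly more explicit about restricting to positive functionals and about the case $p=q$), so there is nothing to add.
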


\begin{proof}
Fix a~finite set $\{x_1,\ldots, x_n\}$ of  $E$, and note that
$(\ell^{q/p})^{*} = \ell^{r/p}$, by $r/p + q/p = 1$. Then we have
that
\begin{align*}
\sup_{(\beta_k)\in B_{r}^n} \Big\Vert\Big(\sum_{k=1}^n |\beta_k
x_k|^p\Big)^{1/p}\, \Big\Vert_{E}^p & =
\sup_{(\beta_k)\in B_{r}^n}\Big\Vert\sum_{k=1}^n |\beta_k x_k|^p\,\Big\Vert_{{E}_p} \\
& = \sup_{(\beta_k)\in B_{r}^n}\sup_{x^{*}\in B_{{{(E}_p)}^*}^+}
\Big\langle \sum_{k=1}^n|\beta_k x_k|^p, \,
x^{*} \Big\rangle \\
& =  \sup_{x^{*} \in B_{{({E}_p)}^*}^+}\,\sup_{(\alpha_k)\in
B_{r/p}^{n}} \sum_{k=1}^n |\alpha_k| \big\langle |x_k|^p, \varphi \big\rangle \\
& = \sup_{x^{*} \in B_{{(E_p)}^*}^+}
\Big(\sum_{k=1}^n\Big(\big\langle |x_k|^p, \, x^{*} \big\rangle
\Big)^{q/p}\,\Big)^{p/q}.
\end{align*}
\end{proof}

Now we state our first main theorem.

\begin{theorem}
\label{domi1} Let $X_j$ be $p_j$-convex Banach lattices and let
$1\leq q_j <\infty$ for each $1\leq j\leq m$. If $1/q = 1/q_1 +
\ldots + 1/q_m$, then the following are equivalent statements about
a~multilinear operator $T$ from $X_1 \times \cdots \times X_m$ to
a~Banach space $E$.

\begin{itemize}
\item[(i)] $T$ is $(p_1, \ldots, p_m)$-strongly $(q_1,\ldots,
q_m)$-concave.

\item[(ii)] There is a~constant $C>0$ such that for every $(x_1,
\ldots, x_m)\in X_1 \times\cdot\cdot\cdot \times X_m$,
\[
\|T(x_1, \ldots, x_m)\|_Y \le C \prod_{j=1}^m
\Big(\int_{B_{{{((X_j)}_{p_j})}^*}^+} \big\langle |x_j|^{p_j}, \,
x^{*}_j \big\rangle^{{q_j}/{p_j}}\, d \nu_j(x^{*}_j)
\Big)^{1/{q_j}},
\]
where $\nu_j$ is a~probability Borel measure on the weak$^{*}$
compact set $B_{{((X_j)_p)}^*}^+$ for each $1\leq j\leq m$.
\end{itemize}
\end{theorem}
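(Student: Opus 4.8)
The plan is to prove the two implications separately, using Lemma~\ref{lemaequi1} to pass back and forth between the ``$\sup_\beta$'' form appearing in the definition and the ``supremum over functionals'' form. Throughout I would write $K_j := B^+_{((X_j)_{p_j})^*}$, which is weak$^*$ compact by the Banach--Alaoglu theorem (the positive cone being weak$^*$ closed), set $\phi_j(x, x^*) := \langle |x|^{p_j}, x^*\rangle^{q_j/p_j}$ for $x \in X_j$, $x^* \in K_j$, and put $w_j := q/q_j$, so that $\sum_{j=1}^m w_j = 1$. Since $X_j$ is $p_j$-convex we have $|x|^{p_j} \in (X_j)_{p_j}$, whence $x^* \mapsto \phi_j(x, x^*)$ is a nonnegative element of $C(K_j)$, homogeneous of degree $q_j$ in $x$.

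For the easy implication (ii)$\Rightarrow$(i), I would start from (ii), raise it to the $q$th power and sum over a finite family $(x^j_k)_k$, and then apply the generalized H\"older inequality with exponents $q_j/q$ (legitimate since $\sum_j w_j = 1$) to bound $\sum_k \prod_j (\int \phi_j(x^j_k,\cdot)\,d\nu_j)^{w_j}$ by $\prod_j (\sum_k \int \phi_j(x^j_k,\cdot)\,d\nu_j)^{w_j}$. Because each $\nu_j$ is a probability measure, $\sum_k \int \phi_j(x^j_k,\cdot)\,d\nu_j \le \sup_{x^*\in K_j} \sum_k \phi_j(x^j_k, x^*)$, and Lemma~\ref{lemaequi1} identifies the latter supremum with the $q_j$th power of the $j$th factor in the definition of strong concavity. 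Reassembling the product and taking $q$th roots yields (i) with the same constant.

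The substantial implication is (i)$\Rightarrow$(ii), a Pietsch-type domination, which I would obtain by a Ky~Fan minimax argument on the compact convex set $Q := \prod_{j=1}^m \mathcal{M}(K_j)$. First, Lemma~\ref{lemaequi1} recasts (i) as $\sum_k \|T(x_k)\|_Y^q \le C^q \prod_j (\sup_{x^*\in K_j}\sum_k \phi_j(x^j_k, x^*))^{w_j}$ for every finite family. The key difficulty is that the target (ii) is a \emph{product} of separate integrals, hence not affine in $(\nu_1,\dots,\nu_m)$; a naive product-form Ky~Fan functional fails the hypothesis because H\"older then runs the wrong way. My plan is therefore to prove first the weaker \emph{sum form}: there exist $\nu_j \in \mathcal{M}(K_j)$ with $\|T(x_1,\dots,x_m)\|_Y^q \le C^q \sum_j w_j \int_{K_j}\phi_j(x_j,\cdot)\,d\nu_j$ for all $x$. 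To this end I attach to each finite family the affine, weak$^*$-continuous functional $h(\nu_1,\dots,\nu_m) = \sum_k (\|T(x_k)\|_Y^q - C^q \sum_j w_j \int \phi_j(x^j_k,\cdot)\,d\nu_j)$ on $Q$. The family of such $h$ is stable under convex combinations (scaling the $j$th coordinate of the data by $c^{1/q_j}$ multiplies $h$ by $c$, and concatenating families adds the functionals), and each $h$ takes a value $\le 0$ on $Q$: indeed, evaluating at the product of Dirac masses at the maximizing functionals and combining the recast form of (i) with the arithmetic--geometric mean inequality $\prod_j A_j^{w_j} \le \sum_j w_j A_j$ for $A_j := \sup_{x^*}\sum_k \phi_j(x^j_k, x^*)$ forces $h$ to be nonpositive there. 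Ky~Fan's lemma then produces a single $(\nu_1,\dots,\nu_m)\in Q$ annihilating all the $h$ simultaneously, which, for $n=1$, is exactly the sum form.

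Finally I would upgrade the sum form to the product form (ii) by exploiting multilinearity. Applying the sum form to $(s_1 x_1,\dots,s_m x_m)$ with $s_j>0$, and using $\|T(s_1x_1,\dots,s_mx_m)\|_Y^q = (\prod_l s_l)^q \|T(x)\|_Y^q$ together with $\int \phi_j(s_j x_j,\cdot)\,d\nu_j = s_j^{q_j} a_j$, where $a_j := \int \phi_j(x_j,\cdot)\,d\nu_j$, gives $\|T(x)\|_Y^q \le C^q (\prod_l s_l)^{-q}\sum_j w_j s_j^{q_j} a_j$ for every $s$. Optimizing the right-hand side over $s$ (balance $s_j^{q_j} a_j$ across $j$; in the degenerate case $a_j = 0$ let the corresponding scalar tend to infinity, which forces $T(x)=0$) yields the infimum $C^q \prod_j a_j^{w_j}$, precisely the product bound (ii). I expect this sum-to-product upgrade, powered by the homogeneity of the multilinear map, to be the conceptual heart of the proof, and the verification of the Ky~Fan hypotheses---convexity and scaling stability of the functional family, weak$^*$ continuity of $h$, and compactness of $Q$---to be the main place where care is needed.
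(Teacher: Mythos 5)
Your proposal is correct and follows essentially the same route as the paper's proof: Lemma~\ref{lemaequi1} to pass to the functional form, Young's (weighted AM--GM) inequality to turn the product of suprema into an affine sum, Ky~Fan's lemma on $\prod_j\mathcal{M}(B^+_{((X_j)_{p_j})^*})$ to extract the measures, and the homogeneity/scaling argument to upgrade the sum domination to the product domination. The only difference is one of completeness: the paper merely sketches the Ky~Fan verification and the sum-to-product step (deferring to Defant's argument), whereas you supply those details explicitly and correctly.
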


\begin{proof}
(i) $\Rightarrow$ (ii). Fix  finite sequences $(x^j_i)_{i=1}^n$ in
$X_j$ for each $1\leq j\leq m$.

First consider Lemma \ref{lemaequi1} (with $m=1$ and $E=X_j$ for
each $j$) for all the factors in the product of the left hand side
of the inequality that provides the definition of
$(p_1,...,p_m)$-strongly $(q_1,...,q_m)$-concave $m$-linear
operator. We obtain that the following inequality is equivalent to
the one in this definition
\[
\Big(\sum_{i=1}^n \|T(x^1_i,\ldots, x^m_i)\|_{Y}^q \Big)^{1/q} \le
C \prod_{j=1}^{m} \sup_{x^{*}_j \in B_{{((X_1)_{p_j})}^*}^+}
\Big(\sum_{i=1}^n\Big(\big \langle |x_i^{j}|^{p_j},\,x_j^{*}
\big\rangle \Big)^{q_j/p_j}\,\Big)^{1/q_j}.
\]

From this on, the proof uses some methods from \cite{defantposit,jca}. We only sketch the main
arguments for the convenience of the reader; using Young's inequality, we obtain that the inequality
above implies
\[
\sum_{i=1}^n \| T(x^1_i, \ldots, x^m_i) \|_{Y}^q \le C^q \,\,
\sum_{j=1}^{m}\bigg(\frac{q}{q_j}\,\sup_{x_j^{*} \in
B_{{((X_j)_{p_j})}^*}^+} \sum_{i=1}^n \Big( \big\langle |
x_i^j|^{p_j},\,x_j^{*} \big\rangle \Big)^{q_j/p_j}\bigg).
\]

We now define a~convex set of continuous real functions
\[
\psi\colon \mathcal{M}(B_{{((X_1)_{p_1})}^*}^+) \times
\cdot\cdot\cdot\times \mathcal{M}(B_{{((X_m)_{p_m})}^*}^+) \to
\mathbb R,
\]
each one associated to each finite sets of finite sequences as the
ones at the beginning of the proof, and given by the formula

\begin{align*}
\psi( \eta_1, \ldots, \eta_m) & :=  \sum_{i=1}^n \|T(x^1_i,\ldots, x^m_i)\|_{Y}^q  \\
& - C^q \,\sum_{j=1}^m \bigg(\frac{q}{q_j}\,
\int_{B_{{((X_j)_{p_j})}^*}^+} \sum_{i=1}^n \Big(\big\langle |
x_i^j|^{p_j}, \, x^{*}_j \big\rangle \Big)^{q_j/p_j} \,d
\eta_j(x^{*}_j) \bigg).
\end{align*}

Note that $\mathcal{M}(B_{{((X_j)_{p_j})}^*}^+)$ is a~compact set
with the product  topology defined by means of the weak$^{*}$
topology of the dual of each Banach space ${(X_j)}_{p_j}$ for each
$1\leq j\leq m$. Recall that these spaces are Banach as a
consequence of the requirement that each of them is $p_i$-convex.
The functions are continuous with respect to the product topology
and satisfy all the properties needed for applying Ky Fan's Lemma
(see, e.g., \cite[Lemma 9.10]{DJT}). This gives an element
\[
(\nu_1,\ldots, \nu_m) \in \mathcal{M}(B_{{((X_1)_{p_1})}^*}^+)
\times \cdots \times \mathcal{M}(B_{{((X_m)_{p_m})}^*}^+)
\]
satisfying
\[
\sum_{i=1}^n \| T(x^1_i, \ldots, x^m_i)\|_{Y}^{q} -  C^q \,\,
\sum_{j=1}^{m}\bigg(\frac{q}{q_j}\, \int_{B_{{((X_j)_{p_j})}^*}^+}
\sum_{i=1}^n   \Big( \big\langle |x_i^j|^{p_j}, \, x_j^{*}
\big\rangle \Big) \bigg)^{q_j/p_j} \,d\nu_j(x_j^{*}) \le 0.
\]

Now, the multilinearity of $T$ allows to use a~direct argument for
choosing the right constants for getting from this sum domination
the product domination that is written in (ii) (see \cite[Theorem
1]{defantposit} for the details).

To conclude it is enough to observe that the converse inequality
is obvious by using Lemma \ref{lemaequi1}.
\end{proof}

The next result deals with factorization of $(p_1,\ldots,
p_m)$-strongly $(q_1,\ldots, q_m)$-concave multilinear operators.
Motivated by the above result we define Banach lattices which are
connected with the obtained characterization of these operators.

Let $1\leq p\leq q<\infty$. For a~given $p$-convex Banach function
lattice $X$ and a~regular Borel probability measure $\nu$ on
$B_{(X_p)^{*}}$ equipped with the weak$^{*}$-topology we define on
$X$ a~functional by
\[
\|x\|_{p,q,\nu} := \Big(\int_{B_{(X_p)^{*}}^{+}} \langle |x|^p,
x^{*} \rangle^{q/p} d\,\nu(x^{*})\Big)^{1/q}, \quad\, x\in X.
\]
We put $F_{X_p}^{q}(\nu):= (X,\|\cdot\|_{p,q,\nu})$. Clearly that
$\rho(\cdot):=\|\cdot\|_{p,q,\nu}$ defines a~lattice seminorm on
$X$. If $N$ is the null ideal of $\rho$, i.e., $N= \{x\in X; \,
\rho(x)= 0\}$, then $X/N$ is a~normed lattice (under the natural
order) equipped with the norm
\[
\|[x]\| := \rho(x), \quad\, [x] \in X/N.
\]
The norm completion $\widetilde{F}_{X_p}^{q}(\nu)$ of $X/N$ with
respect to the above lattice norm is a~Banach lattice. Note that
$\|x\|_{p,q, \nu} \leq \|x\|_X$ for all $x\in X$ implies that the
map $i_X$ defined by
\[
\iota_X(x) = [x], \quad\, x\in X
\]
is bounded from $X$ to $\widetilde{F}_{X_p}^q(\nu)$.

\vspace{2 mm}

We have the following useful lemma.

\begin{lemma}
\label{S-space}  Let $1\leq p\leq q<\infty$ and let $X$ be
a~$p$-convex Banach lattice.
\begin{itemize}
\item[{\rm(i)}] If there exists a~strictly positive functional on
$X_p$, then for every $\nu \in \mathcal{M}(B_{(X_p)^*}^{+})$ there
exists $\widetilde{\nu} \in \mathcal{M}(B_{(X_p)^*}^{+})$ such
that $F_{X_p}(\widetilde{\nu}) = (X, \|\cdot\|_{p, q,
\widetilde{\nu}} )$ is a~normed lattice such that
\[
\frac{1}{2} \|x\|_{p,q, \nu} \leq \|x\|_{p, q, \widetilde{\nu}} \leq
\|x\|_X, \quad\, x\in X.
\]
\item[{\rm(ii)}] If $X$ is an order continuous Banach function lattice on
$(\Omega, \Sigma, \mu)$, then for every $\nu \in \mathcal{M}(B_{(X_p)^*}^{+})$ there exists
a~probability Borel measure $\xi \in \mathcal{M}(B_{(X_p)^{'}}^{+})$ such that the
completion of $F_{X_p}(\widetilde{\nu})$ ---for $\widetilde \nu$ as in {\rm(ii)}--- is an
order continuous Banach function lattice $(S_{X_p}^q(\xi), \|\cdot\|)$ on $(\Omega,
\Sigma, \mu)$ given by
\[
S_{X_p}^q(\xi):= \bigg\{f\in L^0(\mu); \, \, \|f\| =\bigg(\int_{B_{(X_p)'}^{+}} \bigg(\int_{\Omega}
|f(\omega)|^p\,h(\omega)\,d\mu(\omega)\bigg)^{q/p}\,d\xi(h)\bigg)^{1/q} < \infty \bigg\}
\]
and satisfying
\[
\frac{1}{2} \|x\|_{p,q, \nu} \leq \|x\| \leq \|x\|_X, \quad\, x\in X.
\]
\end{itemize}

\end{lemma}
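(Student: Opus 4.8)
The plan is to prove (i) by enriching the measure $\nu$ with an atom at a strictly positive functional, so as to turn the seminorm $\|\cdot\|_{p,q,\nu}$ into a genuine norm while keeping it comparable to the original one. Let $\phi$ be a strictly positive functional on $X_p$, normalised so that $\phi\in B_{(X_p)^*}^+$, and set $\widetilde{\nu}:=\tfrac12\nu+\tfrac12\delta_{\phi}\in\mathcal{M}(B_{(X_p)^*}^+)$. Then for every $x\in X$ one has
\[
\|x\|_{p,q,\widetilde{\nu}}^q=\tfrac12\|x\|_{p,q,\nu}^q+\tfrac12\langle |x|^p,\phi\rangle^{q/p}.
\]
Since $q\ge 1$, the first summand yields the lower estimate $\|x\|_{p,q,\widetilde{\nu}}\ge (\tfrac12)^{1/q}\|x\|_{p,q,\nu}\ge \tfrac12\|x\|_{p,q,\nu}$; for the upper estimate I would combine the inequality $\|x\|_{p,q,\nu}\le\|x\|_X$ recorded before the lemma with $\langle|x|^p,\phi\rangle\le\|\phi\|_{(X_p)^*}\,\||x|^p\|_{X_p}\le\|x\|_X^p$ to obtain $\|x\|_{p,q,\widetilde{\nu}}\le\|x\|_X$. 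Finally, if $x\neq 0$ then $0<|x|^p\in X_p$, so strict positivity forces $\langle|x|^p,\phi\rangle>0$ and hence $\|x\|_{p,q,\widetilde{\nu}}>0$; thus the null ideal is trivial and $F_{X_p}(\widetilde{\nu})=(X,\|\cdot\|_{p,q,\widetilde{\nu}})$ is a normed lattice.

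For (ii) the first step is to pass from the abstract dual to the K\"othe dual. Because $X$ is order continuous, so is its $p$-concavification $X_p$ (if $g_n\downarrow 0$ in $X_p$ then $g_n^{1/p}\downarrow 0$ in $X$, whence $\|g_n\|_{X_p}=\|g_n^{1/p}\|_X^p\to 0$), and for an order continuous Banach function lattice the norm dual coincides with the K\"othe dual, $(X_p)^*=(X_p)'$. Under this identification the weak$^*$ topology corresponds to $\sigma((X_p)',X_p)$, so the measure $\widetilde{\nu}$ of (i) is carried to a regular Borel probability measure $\xi\in\mathcal{M}(B_{(X_p)'}^+)$, and each functional is represented by a density $h\in L^0(\mu)$, $h\ge 0$, with $\langle|x|^p,\cdot\rangle=\int_\Omega|x|^p h\,d\mu$. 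Substituting this representation into $\|\cdot\|_{p,q,\widetilde{\nu}}$ reproduces the defining expression of $S^q_{X_p}(\xi)$. Note that here the hypothesis of (i) holds automatically, since an order continuous Banach function lattice over a $\sigma$-finite space has a weak unit and therefore a strictly positive functional on $X_p$.

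Next I would verify that $(S^q_{X_p}(\xi),\|\cdot\|)$ is an order continuous Banach function lattice. The ideal property is immediate from $|f_1|\le|f_2|\Rightarrow\int_\Omega|f_1|^p h\,d\mu\le\int_\Omega|f_2|^p h\,d\mu$, and the triangle inequality follows from the representation $\|f\|=\big(\int_{B_{(X_p)'}^+}\|f h^{1/p}\|_{L^p(\mu)}^q\,d\xi(h)\big)^{1/q}$ together with subadditivity of the inner $L^p$-norm and the $L^q(\xi)$ triangle inequality; injectivity of $\|\cdot\|$ uses that the density of $\phi$ is strictly positive $\mu$-a.e., so $\|f\|=0$ forces $\int_\Omega|f|^p h_\phi\,d\mu=0$ and hence $f=0$. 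Completeness I would obtain from the Fatou property: if $0\le f_n\uparrow f$ with $\sup_n\|f_n\|<\infty$, two applications of the monotone convergence theorem (first in $\mu$, then in $\xi$) give $\|f_n\|\uparrow\|f\|$, and a normed function lattice with the Fatou property is complete. Order continuity follows dually: if $0\le f_n\downarrow 0$ with $f_1\in S^q_{X_p}(\xi)$, then $\int_\Omega|f_1|^p h\,d\mu<\infty$ for $\xi$-a.e.\ $h$, so dominated convergence in $\mu$ gives $\int_\Omega|f_n|^p h\,d\mu\to 0$, and a further dominated convergence in $\xi$, with dominating function $h\mapsto(\int_\Omega|f_1|^p h\,d\mu)^{q/p}\in L^1(\xi)$, yields $\|f_n\|\to 0$.

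It remains to identify $S^q_{X_p}(\xi)$ with the completion $\widetilde{F}^q_{X_p}(\widetilde{\nu})$. Since $\|x\|=\|x\|_{p,q,\widetilde{\nu}}$ for $x\in X$, the inclusion $X\hookrightarrow S^q_{X_p}(\xi)$ is isometric onto its image, and what remains is density. As $X$ is an ideal of $S^q_{X_p}(\xi)$ inside $L^0(\mu)$ and (restricting to its carrier) has full support, it is order dense; writing $e$ for a weak unit of $X$, the truncations $f\wedge(ne)\in X$ satisfy $f\wedge(ne)\uparrow f$ for $0\le f\in S^q_{X_p}(\xi)$, and order continuity of $\|\cdot\|$ then gives $\|f-f\wedge(ne)\|\to 0$; splitting into positive and negative parts shows $X$ is norm dense. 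Hence $\widetilde{F}^q_{X_p}(\widetilde{\nu})=S^q_{X_p}(\xi)$, and the two-sided estimate of (ii) is inherited from (i). The main obstacle is precisely this last identification: one must control completeness, order continuity and density simultaneously in order to recognise the abstract completion as the concrete function lattice $S^q_{X_p}(\xi)$, and the equality $(X_p)^*=(X_p)'$ furnished by order continuity is the conceptual key that makes the explicit integral formula available.
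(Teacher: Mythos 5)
Your proposal is correct and follows essentially the same route as the paper: in (i) you average $\nu$ with the Dirac measure at a normalized strictly positive functional, and in (ii) you use order continuity to identify $(X_p)^*$ with the K\"othe dual $(X_p)'$, push the measure forward, and add an atom at a strictly positive density. The only difference is that where the paper concludes by citing Proposition~1 of \cite{elprim} to identify the completion with $S^q_{X_p}(\xi)$, you verify directly (and correctly) the completeness, order continuity, and density of $X$ needed for that identification.
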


\begin{proof}
(i). Let $x^* \in (X_p)^{*}$ be a norm one strictly positive functional on $X_p$ and let $\delta_{x^*}\in
\mathcal{M}(B_{(X_p)^*}^{+})$ be the associated Dirac measure. For a~given $\nu \in \mathcal{M}(B_{(X_p)^*}^{+})$,
we define $\widetilde{\nu} := 1/2(\nu + \delta_{x^*}) \in \mathcal{M}(B_{(X_p)^*}^{+})$. It is
obvious that $\widetilde{\nu}$ satisfies the required properties.

(ii). Our hypothesis that $X$ (and so $X_p$) is order continuous implies that
for every $x^{*}\in (X_{p})^*$ there exists unique $h=h_{x^*}\in
(X_p)'$ such that
\[
\langle |x|^p, x^{*} \rangle= \int _{\Omega} |x|^p h \,d\mu,
\quad\, x\in X
\]
with $\|x^{*}\|_{(X_p)^*} = \|h\|_{(X_p)'}$, and moreover the map
$(X_p)^{*} \ni x^{*} \mapsto h_{x^*}$ is an order isometrical
isomorphism. We denote the restriction of this map to
$B_{(X_p)^{*}}$ by $\varphi$. Clearly, $\varphi$ is
a~topological homeomorphism of $B_{(X_p)^*}$ equipped with the
weak$^{*} $ topology onto $B_{(X_p)'}$ equipped with the pointwise
topology induced by $\sigma(X', X)$.

For a~fixed $\nu \in \mathcal{M}(B_{(X_p)^*}^+)$, we define
$\nu_{\varphi} \in \mathcal{M}(B_{(X_p)'}^+)$ by
$\nu_{\varphi}(A):= \nu(\varphi^{-1}(A))$ for any Borel subset of
$B_{(X_p)'}^+$. Then for every $x\in X$, we get that
\begin{align*}
\int_{B_{(X_p)^*}^+} \langle |x|^p, x^{*}\rangle^{q/p}\, d\nu(x^{*})
& = \int_{\varphi^{-1}(B_{(X_p)'}^+)} \langle |x|^p,
\varphi^{-1}(h_{x^{*}}) \rangle^{q/p} \,d\nu(x^{*}) \\
& =\int_{B_{(X_p)'}^+} \bigg(\int_{\Omega} |x|^p
h d \mu \bigg)^{q/p} \,d\nu_{\varphi}(h).
\end{align*}
Since $(X_p)'$ is a~Banach function lattice on $(\Omega, \Sigma,
\mu)$ there exists $h\in B_{(X_p)'}$ with $h>0$ on $\Omega$. Then
$\xi:= 1/2(\nu_{\varphi} + \delta_h ) \in \mathcal{M}(B_{(X_p)'})$, where
$\delta_{h}$ is a~Dirac measure generated by $h$.

Combining the above formula with \cite[Proposition 1]{elprim}, we
conclude that
\[
(S_{X_p}^q(\xi), \|\cdot\|)
\]
is the desired Banach
function lattice.
\end{proof}

\vspace{2 mm}

We are now ready to state the following factorization theorem.

\vspace{2 mm}

\begin{theorem}
\label{prifa}  Let $X_j$ be $p_j$-convex Banach function lattices
and let $1\leq q_j <\infty$ for each $1\leq j\leq m$. If $1/q =
1/q_1 + \ldots + 1/q_m$, then the following are equivalent
statements about a~multilinear operator $T$ from $X_1 \times
\cdots \times X_m$ to a~Banach space $Y$.

\begin{itemize}
\item[(i)] $T$ is $(p_1, \ldots, p_m)$-strongly $(q_1, \ldots,
q_m)$-concave.

\item[(ii)] There are probability Borel measures $\nu_j$ in
$\mathcal{M} (B_{((X_j)_{p_j})^*} )$ for each $1\leq j\leq m$, and
a~multilinear operator $S$ such that $T$ factors as
\[
\xymatrix{{X_1 \times \cdots \times X_m}
\ar[r]^{\,\,\,\,\,\,\,\,\,\,\,\,\,\,\,\,\,\,\,\,\,\,
T} \ar@{->}[d]_{\iota_1 \times \cdots \times \iota_m }  & Y \\
\widetilde{F}^{q_1}_{X_{p_1}}(\nu_1) \times \cdots \times
\widetilde{F}^{q_m}_{X_{p_m}}(\nu_m) \ar[ur]_{S} &  & }
\]
where $\iota_j = \iota_{X_j}$ for each $1\leq j \leq m$.

\end{itemize}
\end{theorem}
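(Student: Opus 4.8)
The plan is to deduce the factorization directly from the domination characterization already established in Theorem \ref{domi1}, after the key observation that the product appearing on the right-hand side of statement (ii) of that theorem is \emph{exactly} $\prod_{j=1}^m \|x_j\|_{p_j,q_j,\nu_j}$, i.e.\ the product of the defining lattice seminorms $\rho_j := \|\cdot\|_{p_j,q_j,\nu_j}$ of the spaces $F^{q_j}_{X_{p_j}}(\nu_j)$. Since each $X_j$ is a $p_j$-convex Banach function lattice, Theorem \ref{domi1} applies, and the whole argument reduces to translating the domination inequality into boundedness of a multilinear map on the relevant quotient/completion.

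For (i) $\Rightarrow$ (ii), I would first invoke Theorem \ref{domi1} to produce probability Borel measures $\nu_j \in \mathcal{M}(B_{((X_j)_{p_j})^*})$ and a constant $C>0$ with
\[
\|T(x_1, \ldots, x_m)\|_Y \le C \prod_{j=1}^m \|x_j\|_{p_j,q_j,\nu_j}
\]
for all $(x_1,\ldots,x_m)\in X_1\times\cdots\times X_m$. I would then define a multilinear map $S$ on the product of quotients $(X_1/N_1)\times\cdots\times(X_m/N_m)$, where $N_j$ is the null ideal of $\rho_j$, by $S([x_1],\ldots,[x_m]) := T(x_1,\ldots,x_m)$. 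The first point to verify is well-definedness: if $x_j\in N_j$ for some $j$, the displayed inequality forces $T(x_1,\ldots,x_m)=0$, so by multilinearity $S$ is independent of the choice of representatives. The same inequality reads $\|S([x_1],\ldots,[x_m])\|_Y \le C\prod_j \|[x_j]\|$, so $S$ is a bounded $m$-linear operator on the product of the normed quotients.

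Next I would extend $S$ to the completions. Since each $X_j/N_j$ is dense in $\widetilde{F}^{q_j}_{X_{p_j}}(\nu_j)$ by construction and $S$ is bounded and $m$-linear, the standard extension of bounded multilinear maps to products of completions applies: given $u_j\in\widetilde{F}^{q_j}_{X_{p_j}}(\nu_j)$ approximated by sequences $([x_j^{(n)}])_n$, multilinearity together with the uniform bound shows that $(S([x_1^{(n)}],\ldots,[x_m^{(n)}]))_n$ is Cauchy in $Y$ and that its limit is independent of the approximating sequences, yielding a bounded $m$-linear extension, again denoted $S$, on $\widetilde{F}^{q_1}_{X_{p_1}}(\nu_1)\times\cdots\times\widetilde{F}^{q_m}_{X_{p_m}}(\nu_m)$. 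Then $S(\iota_1(x_1),\ldots,\iota_m(x_m)) = S([x_1],\ldots,[x_m]) = T(x_1,\ldots,x_m)$, which is precisely the asserted factorization $T = S\circ(\iota_1\times\cdots\times\iota_m)$.

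The reverse implication (ii) $\Rightarrow$ (i) is immediate: using $\|\iota_j(x_j)\| = \|x_j\|_{p_j,q_j,\nu_j}$ and boundedness of $S$,
\[
\|T(x_1,\ldots,x_m)\|_Y = \|S(\iota_1(x_1),\ldots,\iota_m(x_m))\|_Y \le \|S\|\prod_{j=1}^m \|x_j\|_{p_j,q_j,\nu_j},
\]
which is exactly condition (ii) of Theorem \ref{domi1} and hence equivalent to (i). The only genuinely nontrivial step is the multilinear extension to the completions; everything else is a direct translation through Theorem \ref{domi1}. I expect the main care to be needed in checking that the limit defining the extended $S$ is well defined, i.e.\ independent of the approximating sequences, which is precisely where the uniform bound coming from the domination inequality is used.
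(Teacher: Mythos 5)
Your proposal is correct and follows essentially the same route as the paper: both pass through Theorem \ref{domi1} to obtain the pointwise domination $\|T(x_1,\ldots,x_m)\|_Y \le C\prod_j\rho_j(x_j)$, define the induced multilinear map on the quotients $X_j/N_j$, and extend it by continuity to the completions $\widetilde{F}^{q_j}_{X_{p_j}}(\nu_j)$. Your additional care with well-definedness on the quotients and with the Cauchy-sequence argument for the extension only makes explicit what the paper leaves implicit.
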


\begin{proof}
(i) $\Rightarrow$ (ii). From Theorem \ref{domi1}, it follows that
there is a~constant $C>0$ such that for every $(x_1, \ldots, x_m)
\in X_1 \times \cdot\cdot\cdot \times X_m$,
\[
\|T(x^1, \ldots, x^m)\|_Y \le C \prod_{j=1}^m
\Big(\int_{B_{{{((X_j)}_{p_j})}^*}^+} \big\langle |x_j|^{p_j}, \,
x^{*}_j \big\rangle^{{q_j}/{p_j}}\, d \nu_j(x^{*}_j)
\Big)^{1/{q_j}},
\]
where $\nu_j$ is a~probability Borel measure on the weak$^{*}$-
compact set $B_{{((X_j)_p)}^{*}}^{+}$ for each $1\leq j\leq m$. This
implies that
\[
\|T(x_1, \ldots, x_m)\|_Y \le C \rho_1(x_1) \cdot \cdot\cdot
\rho_m(x_m)
\]
holds for all $(x_1,\ldots, x_m) \in X_1 \times \cdot\cdot\cdot
\times X_m$ with $\rho_j(\cdot) = \|\cdot\|_{P_j, q_j, \nu_j}$ for
each $1\leq j\leq m$. In particular, this implies that the formula
\[
T_0([x_1], \ldots, [x_m]) := T(x_1, \ldots, x_m), \quad\,
(x_1,\ldots, x_m) \in X_1 \times\cdot\cdot\cdot\times X_m
\]
defines a~bounded multilinear operator from $X_1/N_1 \times
\cdot\cdot\cdot \times X_m/N_m$ to $Y$, where $N_j = \{x \in
X_j;\, \rho_j(x) = 0\}$ for each $1\leq j\leq m$. Denote by $S$
the unique multilinear continuous extension of $T_0$ to
$\widetilde{F}_{X_{p_1}}^{q_1}\times \cdot\cdot\cdot \times
\widetilde{F}_{X_{p_m}}^{q_m}$. Clearly we have  that $\iota_1 \times
\cdot\cdot\cdot \times \iota_m$ given by
\[
(\iota_1 \times\cdot\cdot\cdot\times \iota_m)(x_1,\ldots, x_m): = ([x_1],
\ldots, [x_m])
\]
for all $(x_1, \ldots, x_m) \in X_1\times\cdot\cdot\cdot \times
X_m$ is a bounded linear operator from
$X_1\times\cdot\cdot\cdot\times X_m$ to
$\widetilde{F}_{X_{p_1}}^{q_1}(\nu_1) \times \cdot\cdot\cdot
\times \widetilde{F}_{X_{p_m}}^{q_m}(\nu_m)$ and so we have the
required factorization
$$
T = S\circ (\iota_1 \times\cdot\cdot\cdot
\times \iota_m).
$$

The implication (ii) $\Rightarrow$ (i) is obvious.
\end{proof}

Combing the above corollary with Lemma \ref{S-space} we obtain the
following result for the case of order continuous Banach function
lattices.

\begin{corollary}
Let $X_j$ be order continuous $p_j$-convex Banach function lattices on measure
spaces $(\Omega_j, \Sigma_j, \mu_j)$ and let $1\leq q_j <\infty$,
$1\leq j\leq m$. If $1/q = 1/q_1 + \ldots + 1/q_m$, then the
following are equivalent statements about an $m$-linear operator
$T$ from $X_1 \times \cdots \times X_m$ to a~Banach space $Y$.
\begin{itemize}
\item[(i)] $T$ is $(p_1, \ldots, p_m)$-strongly $(q_1, \ldots,
q_m)$-concave. \item[(ii)] There are probability measures $\nu_j$
in $\mathcal{M} (B_{((X_j)_{p_j})^{'}} )$ for each $1\leq j\leq
m$, and a~multilinear operator $S$ such that $T$ factors
through the product of Banach function lattices
$S^{q_j}_{X_{p_j}}(\nu_j)$ on the corresponding measure spaces
$(\Omega_j, \Sigma_j, \mu_j)$ as
\[
\xymatrix{{X_1 \times \cdots \times X_m}
\ar[r]^{\,\,\,\,\,\,\,\,\,\,\,\,\,\,\,\,\,\,
T} \ar@{->}[d]_{\iota_1 \times \cdots \times \iota_m }  & Y \\
S^{q_1}_{X_{p_1}}(\nu_1) \times \cdots \times
S^{q_m}_{X_{p_m}}(\nu_m) \ar[ur]_{S} &  & }
\]
where $\iota_j\colon X_j \to S^{q_j}_{X_{p_j}}(\nu_j)$ are
continuous inclusions for  $1\leq j\leq m$.
\end{itemize}
\end{corollary}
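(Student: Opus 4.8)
The plan is to obtain both implications directly from the machinery already in place, viewing this corollary as the order-continuous refinement of Theorem \ref{prifa} in which the abstract completions $\widetilde{F}^{q_j}_{X_{p_j}}$ are replaced by the concrete Banach function lattices $S^{q_j}_{X_{p_j}}$ furnished by Lemma \ref{S-space}(ii). The implication (ii) $\Rightarrow$ (i) is the routine one, so I would settle it first; the content is in (i) $\Rightarrow$ (ii), where one must realize the targets of the factorization concretely on the original measure spaces.

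For (ii) $\Rightarrow$ (i) I would start from $T = S\circ(\iota_1\times\cdots\times\iota_m)$ and use multilinearity together with the boundedness of $S$ to get
\[
\sum_{k=1}^n\|T(x^1_k,\ldots,x^m_k)\|_Y^q \le \|S\|^q\sum_{k=1}^n\prod_{j=1}^m\|\iota_j(x^j_k)\|_{S^{q_j}_{X_{p_j}}(\nu_j)}^q .
\]
Because $\sum_{j=1}^m q/q_j = 1$, the generalized Hölder inequality with exponents $q_j/q$ bounds the $q$-th root of the right-hand side by $\|S\|\prod_{j=1}^m(\sum_{k=1}^n\|x^j_k\|_{p_j,q_j,\nu_j}^{q_j})^{1/q_j}$. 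Pulling each finite sum inside the integral defining the norm of $S^{q_j}_{X_{p_j}}(\nu_j)$ dominates the $j$-th factor by $\sup_{h}(\sum_{k=1}^n\langle|x^j_k|^{p_j},h\rangle^{q_j/p_j})^{1/q_j}$, and Lemma \ref{lemaequi1} rewrites this supremum as $\sup_{(\beta^j_k)}\|(\sum_{k=1}^n|\beta^j_k x^j_k|^{p_j})^{1/p_j}\|_{X_j}$. This is exactly (i) with constant $\|S\|$.

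For (i) $\Rightarrow$ (ii) I would invoke Theorem \ref{prifa} to produce measures $\sigma_j\in\mathcal M(B_{((X_j)_{p_j})^*}^+)$ and a factorization of $T$ through $\prod_{j=1}^m\widetilde{F}^{q_j}_{X_{p_j}}(\sigma_j)$. For each $j$ I would then apply Lemma \ref{S-space}(ii): since $X_j$ is an order continuous Banach function lattice, its Köthe dual $(X_{p_j})'$ carries a strictly positive element, so the hypothesis of part (i) of that lemma holds, and part (ii) identifies the completion of $F_{X_{p_j}}(\widetilde{\sigma}_j)$ with the order continuous Banach function lattice $S^{q_j}_{X_{p_j}}(\nu_j)$ on $(\Omega_j,\Sigma_j,\mu_j)$, where $\nu_j$ is the image of $\sigma_j$ under the order-isometric homeomorphism $\varphi$. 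The norm estimates $\tfrac{1}{2}\|\cdot\|_{p_j,q_j,\sigma_j}\le\|\cdot\|_{S^{q_j}_{X_{p_j}}}\le\|\cdot\|_{X_j}$ ensure that the natural inclusions $\iota_j\colon X_j\to S^{q_j}_{X_{p_j}}(\nu_j)$ stay bounded and that $S$ extends boundedly, which yields the stated diagram.

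The step I expect to be the main obstacle is the bookkeeping at the seam between the two inputs. Theorem \ref{prifa} delivers measures on the abstract dual ball $B_{((X_j)_{p_j})^*}^+$ and an abstract metric completion, whereas the corollary demands measures on the Köthe dual ball $B_{((X_j)_{p_j})'}^+$ and a concrete function lattice on $(\Omega_j,\Sigma_j,\mu_j)$. Reconciling the two requires transporting $\sigma_j$ through $\varphi$, checking that replacing $\sigma_j$ by $\widetilde{\sigma}_j$ (to upgrade the seminorm $\|\cdot\|_{p_j,q_j,\sigma_j}$ to a genuine norm via the strictly positive functional) costs only the harmless factor $\tfrac{1}{2}$, and confirming that the resulting null ideal is trivial, so that $\iota_j$ is literally the inclusion $X_j\hookrightarrow S^{q_j}_{X_{p_j}}(\nu_j)$ rather than a quotient map onto a completion.
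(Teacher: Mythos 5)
Your proposal is correct and follows exactly the route the paper takes: the paper derives this corollary in one line by combining Theorem \ref{prifa} with Lemma \ref{S-space}, which is precisely your (i) $\Rightarrow$ (ii) argument, and your (ii) $\Rightarrow$ (i) via H\"older and Lemma \ref{lemaequi1} is the standard converse the paper leaves implicit. The extra care you take at the ``seam'' (transporting the measures through $\varphi$ and absorbing the factor $\tfrac{1}{2}$) is a faithful expansion of what the paper compresses into the phrase ``combining with Lemma \ref{S-space}.''
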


\section{Domination and the Fremlin tensor product}

In this section we show  the relation among summability of
multilinear operators from suitable products of Banach function
lattices and Fremlin tensor products. This will provide  a  class
of multilinear operators which is different of the one analyzed in
the previous section. The main difference is that the
factorization is in the present case defined by a multilinear
operator with values in a tensor product structure and a linear
map, in an opposite way as what happens with the class of $(p_1,
\ldots, p_m)$-strongly $(q_1,\ldots, q_m)$-concave operators.

\vspace{2 mm}

\begin{theorem}
\label{th2}
Let $T\colon X_1 \times \cdots \times X_m \to Y$ be a~Banach space valued multilinear
operator, where $X_j$, $1\leq j\leq m$, are Banach lattices. Suppose that
$X_1\otimes_{|\pi|} \cdots \otimes_{|\pi|} X_m$ is embedded in the $p$-convex Banach
lattice $E$. Then the following statements are equivalent.
\begin{itemize}
\item[(i)] There is a~constant $C>0$ such that for each $1\leq
j\leq m$ and every choice of $(x^j_i)_{i=1}^n$ in $X_j$,
\[
\Big(\sum_{i=1}^n \| T(x^1_i, \ldots, x^m_i) \|_Y^q \Big)^{1/q}
\le C \sup_{(\beta_i) \in B_{r}^n}\Big\Vert\Big(\sum_{i=1}^n
\big|\beta_i \big(x_i^1 \otimes \cdot\cdot\cdot\otimes x_i^m\big)\big|^p
\Big)^{1/p}\,\Big\Vert_{E}.
\]

\item[(ii)] There is a constant $C>0$ such that for every
$(x_1,\ldots, x_m) \in X_1 \times \cdots \times X_m$,
\[
\|T(x_1,\ldots, x_m)\|_Y \le C \Big( \int_{B_{(E_p)^{*}}^+}
\big\langle | x_1 \otimes\cdot\cdot\cdot\otimes x_m |^p, \,
x^{*}\big\rangle^{q/p}\, d \nu \Big)^{1/q},
\]
where $\nu$ is a probability Borel measure on the weak* compact
set $B_{{({E}_p)}^*}^+$.
\end{itemize}
\end{theorem}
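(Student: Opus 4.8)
This is essentially the same structure as Theorem 3.2 (domi1), but now applied to a single "effective" domain — the Fremlin tensor product embedded in the $p$-convex lattice $E$ — rather than to a product of separately-convex lattices. The key realization is that the expression $x_i^1 \otimes \cdots \otimes x_i^m$ plays the role of a single vector $u_i$ living in $E$, and condition (i) is precisely a $p$-strong $q$-concavity inequality for the sequence $(u_i)$ in $E$. So I should reduce everything to the linear/single-lattice machinery already built in Lemma 2.6 (lemaequi1) and the proof of Theorem 3.2.

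Let me think about what the proof needs.

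**The plan:**

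The plan is to recognize that the right-hand side of (i) depends only on the single sequence $u_i := x_i^1 \otimes \cdots \otimes x_i^m \in E$, and to apply Lemma \ref{lemaequi1} with the single $p$-convex lattice $E$. The implication (ii) $\Rightarrow$ (i) should be the easy direction: starting from the pointwise domination in (ii), I raise to the $q$-th power, sum over $i$, pull the sum inside the integral over $B_{(E_p)^*}^+$, bound the sum of $q/p$-powers by the supremum over $x^*$, and then invoke Lemma \ref{lemaequi1} (applied to $E$ and the sequence $(u_i)$) to rewrite that supremum as the supremum over $(\beta_i) \in B_r^n$ of $\|(\sum_i |\beta_i u_i|^p)^{1/p}\|_E$. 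This recovers exactly inequality (i). This is the direction where nothing can go wrong.

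**The main direction and its obstacle:**

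For (i) $\Rightarrow$ (ii) I would follow the Ky Fan argument of Theorem \ref{domi1} almost verbatim, but now with only one convex compact set $\mathcal{M}(B_{(E_p)^*}^+)$ in play, so no Young-inequality splitting across $m$ factors and no multilinear "product-from-sum" step is needed. First I use Lemma \ref{lemaequi1} to rewrite (i) as
\[
\Big(\sum_{i=1}^n \|T(x_i^1,\ldots,x_i^m)\|_Y^q\Big)^{1/q} \le C \sup_{x^* \in B_{(E_p)^*}^+}\Big(\sum_{i=1}^n \langle |u_i|^p, x^*\rangle^{q/p}\Big)^{1/q}.
\]
Then I define, for each finite family of sequences, the continuous affine (hence concave) function
\[
\psi(\eta) := \sum_{i=1}^n \|T(x_i^1,\ldots,x_i^m)\|_Y^q - C^q \int_{B_{(E_p)^*}^+} \sum_{i=1}^n \langle |u_i|^p, x^*\rangle^{q/p}\, d\eta(x^*)
\]
on $\mathcal{M}(B_{(E_p)^*}^+)$, observe that the family of such $\psi$ is convex and that each attains a nonpositive value at the Dirac-supported measure realizing the supremum above, and apply Ky Fan's lemma (\cite[Lemma 9.10]{DJT}) to extract a single $\nu \in \mathcal{M}(B_{(E_p)^*}^+)$ with $\psi(\nu) \le 0$ for all $\psi$ simultaneously. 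Specializing to singletons ($n=1$) gives exactly the integral domination (ii).

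**The one genuine subtlety:**

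The hard part will be verifying that condition (i), as stated with the same constant $C$ required to work "for each $1\le j\le m$," is correctly interpreted as a single inequality on the tensors $u_i$; I would note that the phrase is slightly loose and that the content is the single vector inequality in $E$. The other point to check carefully is that $u_i = x_i^1 \otimes \cdots \otimes x_i^m$ genuinely lies in $E$ and that $|u_i|^p$ pairs against $B_{(E_p)^*}^+$ as required — this uses the embedding of the Fremlin tensor product into $E$ together with the identification of $(E_p)^*$ from the $p$-concavification. Everything else is a transcription of the Theorem \ref{domi1} argument with $m$ collapsed to a single factor, so no new analytic difficulty arises; the role of the multilinearity of $T$ here is only to make the left-hand side well defined, not to drive the extraction of the measure.
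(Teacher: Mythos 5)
Your proposal is correct and follows essentially the same route as the paper: the authors likewise reduce to the single $p$-convex lattice $E$ via the embedding of the Fremlin tensor product (so that $|x_1\otimes\cdots\otimes x_m|^p$ gives a continuous function on $B_{(E_p)^*}$), invoke Lemma \ref{lemaequi1} to rewrite (i), and run the Ky Fan argument of Theorem \ref{domi1} over the single compact convex set $\mathcal{M}(B_{(E_p)^*}^+)$, with the converse being a direct computation. Your observation that the Young-inequality splitting and the multilinear product-from-sum step become unnecessary is exactly the simplification implicit in the paper's sketch.
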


\begin{proof}
The argument follows the lines of the one given for Theorem
\ref{domi1}. The $p$-convexity of $E$ implies that $E_p$ is
a~Banach lattice. By the inclusion of the Fremlin tensor product
$X_1 \otimes_{|\pi|} \cdot\cdot \cdot \otimes_{|\pi|} X_m
\hookrightarrow E$ we have that all the tensors $x_1\otimes
\cdot\cdot\cdot \otimes x_m$ are in $E$, and so $|x_1\otimes
\cdot\cdot\cdot \otimes x_m|^p$ define a~continuous function in
$C(B_{(E_p)^{*}})$, where $B_{(E_p)^*}$ is equipped with the
induced topology by the weak$^*$ topology of $(E_p)^{*}$. From
this point on, the proof using Ky Fan's Lemma is similar to the
one of Theorem \ref{domi1}, using Lemma \ref{lemaequi1} for
defining the right set of functions $\phi\colon
\mathcal{M}(B_{(E_p)^*}) \to \mathbb{R}$, where only functions as
$|x_1\otimes \cdot\cdot\cdot \otimes x_m|^p$ are considered.

The converse implication is easily obtained by a~direct
calculation.
\end{proof}

\vspace{1.5 mm}

In the case of Banach function lattices on measure spaces we
obtain the following results on factorization of multilinear
operators.

\begin{corollary}
\label{co2} Let $T\colon X_1 \times
\cdots \times X_m \to Y$ be a~Banach space valued multilinear
 operator, where $X_j$ are Banach function lattices on
$(\Omega_j, \Sigma_j, \mu_j)$, $1\leq j\leq m$. Suppose
 that $X_1\otimes_{|\pi|} \cdots \otimes_{|\pi|} X_m$ is continuously embedded in
$E$, where $E=E(\mu_1 \times \cdots \times \mu_m)$
is a~$p$-convex Banach function lattice on the product measure
space. Then the following statements are equivalent.
\begin{itemize}
\item[(i)] There is a constant $C>0$ such that for each $1\leq
j\leq m$ and for every choice of sequences $(x^j_i)_{i=1}^n$ in $X_j$,
\[
\Big(\sum_{i=1}^n \| T(x^1_i, \ldots, x^m_i) \|_Y^q \Big)^{1/q}
\le C \sup_{(\beta_i) \in B_{r}^n}\Big\Vert\Big(\sum_{i=1}^n
\big|\beta_i\big(x_i^1 \odot \cdot\cdot\cdot\odot x_i^m)\big|^p
\Big)^{1/p}\,\Big\Vert_{E}.
\]

\item[(ii)] There is a constant $C>0$ such that for every
$(x_1,\ldots, x_m) \in X_1 \times \cdots \times X_m$,
\[
\|T(x_1,\ldots, x_m)\|_E \le C \Big(\int_{B_{(E_p)^{*}}^+}
\big\langle | x_1 \odot\cdot\cdot\cdot\odot x_m |^p, \, x^{*}
\big\rangle^{q/p}\, d \nu \Big)^{1/q},
\]
where $\nu$ is a probability Borel measure on the weak* compact
set $B_{{({E}_p)}^*}^+$.
\end{itemize}
\end{corollary}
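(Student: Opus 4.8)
The plan is to deduce this from Theorem \ref{th2} by replacing the abstract Fremlin tensor product with its concrete functional realization, and then reading the two conditions through the resulting Riesz isomorphism. Recall from the preliminaries that when $X_1,\ldots,X_m$ are Banach function lattices on the $\sigma$-finite spaces $(\Omega_j,\Sigma_j,\mu_j)$, the product measure $\mu_1\times\cdots\times\mu_m$ is again $\sigma$-finite and the Archimedean Riesz space $X_1\bar{\otimes}\cdots\bar{\otimes}X_m$ is, by Fremlin's construction \cite{Fremlin1,Fremlin2}, Riesz isomorphic to the sublattice of $L^0(\mu_1\times\cdots\times\mu_m)$ generated by the pointwise products $x_1\odot\cdots\odot x_m$, the Fremlin map $\bigotimes$ corresponding under this isomorphism to $(x_1,\ldots,x_m)\mapsto x_1\odot\cdots\odot x_m$ and the positive-projective norm $\|\cdot\|_{|\pi|}$ being carried across. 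I would denote by $\Phi$ this Riesz isomorphism onto the concrete sublattice of genuine functions; it is exactly this identification that distinguishes the present function-lattice setting from the general one of Theorem \ref{th2}.

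The second step is to observe that, being a Riesz isomorphism, $\Phi$ intertwines all the lattice-theoretic quantities appearing in the two statements. In particular $\Phi(|x_1\otimes\cdots\otimes x_m|)=|x_1\odot\cdots\odot x_m|$, the pointwise absolute value of the product function, and, by the homogeneous (Krivine) functional calculus---which is preserved by Riesz morphisms---the finite expressions $\bigl(\sum_i|\beta_i(x_i^1\otimes\cdots\otimes x_i^m)|^p\bigr)^{1/p}$ are carried to $\bigl(\sum_i|\beta_i(x_i^1\odot\cdots\odot x_i^m)|^p\bigr)^{1/p}$, and likewise $|x_1\otimes\cdots\otimes x_m|^p$ to $|x_1\odot\cdots\odot x_m|^p$. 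Since all the vectors occurring in (i) and (ii) lie in the dense subspace of elementary tensors, which $\Phi$ realizes as honest functions, the continuous embedding $X_1\otimes_{|\pi|}\cdots\otimes_{|\pi|}X_m\hookrightarrow E$ of the hypothesis becomes, after composing with $\Phi^{-1}$, precisely the inclusion of the concrete tensor product into the $p$-convex Banach function lattice $E=E(\mu_1\times\cdots\times\mu_m)$. Consequently every $E$-norm and every duality pairing against an element of $B_{(E_p)^*}^+$ takes the same value whether written with $\otimes$ or with $\odot$.

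With these identifications in place, the statement of the corollary is literally that of Theorem \ref{th2} read through $\Phi$: condition (i) here is condition (i) there, and condition (ii) here---with $|x_1\odot\cdots\odot x_m|^p$ in place of $|x_1\otimes\cdots\otimes x_m|^p$ in the integrand over $B_{(E_p)^*}^+$---is condition (ii) there. Hence the equivalence (i) $\Leftrightarrow$ (ii) follows at once from Theorem \ref{th2}, with the same constant $C$ and the same measure $\nu$.

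I expect the only point requiring genuine care to be the second step: verifying that the canonical map between the abstract Fremlin tensor product and the concrete Riesz space generated by the products $\odot$ is indeed a Riesz isomorphism, and that it commutes with the functional calculus used to build the $p$-th-power lattice expressions. Once this is granted, the embedding into the function lattice $E$ and the entire domination/integral machinery transfer verbatim, so no analytic estimate beyond Theorem \ref{th2} is needed.
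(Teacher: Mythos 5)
Your proposal is correct and follows exactly the route the paper intends: the corollary is stated as an immediate specialization of Theorem \ref{th2}, using the identification (recalled in Section \ref{Sla2}) of the Fremlin tensor product of Banach function lattices with the concrete Riesz space generated by the pointwise products $x_1\odot\cdots\odot x_m$ in $L^0(\mu_1\times\cdots\times\mu_m)$, so that every lattice expression and $E$-norm in (i) and (ii) transfers verbatim from $\otimes$ to $\odot$. The paper gives no further argument, so your more explicit verification that the Riesz isomorphism commutes with the Krivine functional calculus is simply a careful spelling-out of the same proof.
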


Using the same proof but changing single tensors $x_1\otimes
\cdot\cdot\cdot \otimes x_m$ by finite combinations of these
products, we obtain the corresponding factorization theorem.

\begin{corollary}
Under the assumptions of Theorem \ref{th2} on the spaces $X_1,
\ldots, X_m$, $E$ and the multilinear operator $T\colon X_1\times
\cdot\cdot\cdot \times X_m \to Y$, the following statements are
equivalent.
\begin{itemize}
\item[(i)] There is a constant $C>0$ such that for each $1\leq
j\leq m$ and for every choice of matrices
$(x^j_{i,k})_{i=1,k=1}^{N,M}$ in $X_j$,
$(\lambda_{i,k})_{i=1,k=1}^{N,M}$ in $\mathbb{R}$,
\begin{align*}
\Big( \sum_{i=1}^N \Big\|& \sum_{k=1}^M \lambda_{i,k}
T(x^1_{i,k},\ldots, x^m_{i,k})\Big\|_{Y}^q \Big)^{1/q} \\
& \le C \sup_{(\beta_i) \in B_{r}^n}\Big\Vert\Big(\sum_{i=1}^N
\Big|\beta_i \Big(\sum_{k=1}^M \lambda_{i,k} \big(x_{i,k}^1\otimes
\cdot\cdot\cdot \otimes x_{i,k}^m\big) \Big)\Big|^p
\Big)^{1/p}\,\Big\Vert_{E}.
\end{align*}

\item[(ii)] The operator $T$ admits the following factorization
\[
\xymatrix{{X_1 \times \cdots \times X_m}
\ar[r]^{\,\,\,\,\,\,\,\,\,\,\,\,\,\,\,\,\,\,\,\,\,\,\,\,\,\,\,\,\,\,\,\,\,\,\,
T} \ar@{->}[d]_{\bigotimes}  & E \\
\widetilde{F}^{q}_{E_{p}}(\nu) \ar[ur]_{S} & & }
\]
where $\nu$ is a probability Borel measure on the weak* compact
set $B_{{({E}_p)}^*}^+$ and $\bigotimes$ is the Fremlin map.
\end{itemize}
\end{corollary}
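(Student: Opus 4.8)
The plan is to run the proof of Theorem~\ref{th2} essentially verbatim, with single tensors replaced by finite linear combinations, and then to package the resulting domination into a factorization through $\widetilde{F}^{q}_{E_p}(\nu)$ exactly as in Theorem~\ref{prifa}. For (i)~$\Rightarrow$~(ii) I would first linearize $T$: by the universal property of the algebraic tensor product there is a unique linear map $\hat T\colon X_1\otimes\cdots\otimes X_m\to Y$ with $\hat T(x_1\otimes\cdots\otimes x_m)=T(x_1,\ldots,x_m)$, and under the embedding $X_1\otimes_{|\pi|}\cdots\otimes_{|\pi|}X_m\hookrightarrow E$ its domain sits inside $E$. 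Writing $u_i=\sum_{k}\lambda_{i,k}\,(x^1_{i,k}\otimes\cdots\otimes x^m_{i,k})$, condition~(i) reads precisely
\[
\Big(\sum_i\|\hat T(u_i)\|_Y^q\Big)^{1/q}\le C\sup_{(\beta_i)\in B_r^n}\Big\Vert\Big(\sum_i|\beta_i u_i|^p\Big)^{1/p}\Big\Vert_E,
\]
now for arbitrary elements $u_i$ of the algebraic tensor product rather than for single tensors.

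The analytic heart is unchanged from Theorem~\ref{th2}. Applying Lemma~\ref{lemaequi1} to the right-hand side and then running the Ky Fan argument --- with the convex family of continuous functions on $\mathcal M(B_{(E_p)^*}^+)$ now built from the $|u_i|^p$ instead of from single $|x_1\otimes\cdots\otimes x_m|^p$ (these are still in $C(B_{(E_p)^*})$ since $u_i\in E$) --- produces a probability measure $\nu$ on $B_{(E_p)^*}^+$ with
\[
\|\hat T(u)\|_Y\le C\Big(\int_{B_{(E_p)^*}^+}\langle|u|^p,x^*\rangle^{q/p}\,d\nu\Big)^{1/q}=C\,\|u\|_{p,q,\nu}
\]
for every $u$ in the tensor product. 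This displayed estimate is the only place where the passage from single tensors to finite combinations is actually used.

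To assemble the factorization I would argue as in Theorem~\ref{prifa}. The above inequality shows that $\hat T$ is dominated by the lattice seminorm $\rho=\|\cdot\|_{p,q,\nu}$ on $E$; hence it annihilates the null ideal $N=\{u:\rho(u)=0\}$ and descends to a $C$-bounded linear map on the $\rho$-normed image of the tensor product in $E/N$, which extends by continuity to a bounded linear $S\colon\widetilde{F}^{q}_{E_p}(\nu)\to Y$. Taking $\bigotimes$ to be the multilinear map $(x_1,\ldots,x_m)\mapsto[x_1\otimes\cdots\otimes x_m]$, i.e.\ the Fremlin map followed by $\iota_E$, multilinearity of $T$ and the definition of $\hat T$ give $T=S\circ\bigotimes$. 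The converse (ii)~$\Rightarrow$~(i) is a direct computation: with $u_i$ as above one has $\sum_k\lambda_{i,k}T(x^1_{i,k},\ldots,x^m_{i,k})=S([u_i])$, so $\sum_i\|\,\cdot\,\|_Y^q\le\|S\|^q\sum_i\|u_i\|_{p,q,\nu}^q$, and Lemma~\ref{lemaequi1} turns $\sum_i\|u_i\|_{p,q,\nu}^q$ back into the supremum over $B_r^n$ in~(i).

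The only real friction I anticipate is bookkeeping in the factorization step: the algebraic tensor product is dense in $X_1\otimes_{|\pi|}\cdots\otimes_{|\pi|}X_m$ but need not be dense in $E$, nor its image dense in $\widetilde{F}^{q}_{E_p}(\nu)$, so $S$ is genuinely constructed only on the closed subspace generated by the range of $\bigotimes$. One must therefore either read the diagram as a factorization through that closed subspace or extend $S$ to all of $\widetilde{F}^{q}_{E_p}(\nu)$, and be careful not to overstate density in the whole completion.
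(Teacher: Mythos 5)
Your proposal is correct and follows essentially the same route the paper intends: the paper's own ``proof'' is just the remark that one repeats the argument of Theorem \ref{th2} with single tensors replaced by finite linear combinations, and then assembles the factorization as in Theorem \ref{prifa}, which is exactly what you do (your closing caveat about the range of $\bigotimes$ not being dense in $\widetilde{F}^{q}_{E_p}(\nu)$ is a fair and correctly handled point of bookkeeping).
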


\section{Factorization of $p$-strongly $q$-dominated operators.}

In this section we prove a factorization theorem for a special class
of linear operators between Banach lattices. We start with the following
definition. Let $1 \le q \le \infty$ and $1/q+1/q'=1$. Let
$1 \le p_1 \le q_1=q$, $1\le p_2 \le q_2 = q'$ and
$1/r_1 = 1/p_1 - 1/q_1$, $1/r_2 = 1/p_2 - 1/q_2$. An operator
$T\colon X \to Y$ between Banach lattices is said to be
\textit{$(p_1,p_2)$-strongly $(q_1,q_2)$-concave} whenever
\[
\Big|\sum_{k=1}^n \langle Tx_{k}, y_{k}^{*} \rangle \Big| \le C
\sup_{(\alpha_k) \in B_{r_1}^n} \Big\Vert\Big(\sum_{k=1}^n
|\alpha_k x_k|^{p_1} \Big)^{1/{p_1}}\,\Big\Vert_{X}\,
\sup_{(\beta_k) \in B_{r_2}^n} \Big\Vert\Big(\sum_{k=1}^n |\beta_k
y_k^*|^{p_2} \Big)^{1/{p_2}}\,\Big\Vert_{Y^{*}},
\]
for every choice of sequences  $(x_k)_{k=1}^n$ in $X$ and
$(y_{k}^{*})_{k=1}^n$ in $Y^{*}$.

We note that general examples of $(p_1,p_2)$-strongly
$(q_1,q_2)$-concave operators are given by the classical
$q$-dominated operators. Indeed, an operator $T\colon X \to Y$ is
said to be $q$-dominated ($1 \leq q <\infty$) if
\[
\Big|\sum_{k=1}^n \langle T(x_k),y_{k}^{*} \rangle\Big| \le C
\sup_{x^{*} \in B_{X^*}} \Big(\sum_{k=1}^n |\langle x_k, x^*
\rangle |^{q} \Big)^{1/{q}}\,\sup_{y^{**} \in B_{Y^{**}}}
\Big(\sum_{k=1}^n |\langle y_k^*, y^{**} \rangle |^{q'}
\Big)^{1/{q'}}
\]
for every choice of $(x_k)_{k=1}^n$ in $X$ and
$(y_{k}^{*})_{k=1}^n$ in $Y^{*}$.

Since $1/q= 1 -1/q'$, $1/q'= 1 -1/q$ and
\begin{align*}
\sup_{x^{*} \in B_{X^*}}&\Big(\sum_{k=1}^n |\langle x_k, x^*
\rangle|^{q} \Big)^{1/{q}}\, \cdot \, \sup_{y^{**} \in B_{Y^{**}}}
\Big(\sum_{k=1}^n |\langle y_k^*, y^{**} \rangle |^{q'}
\Big)^{1/{q'}} \\
& \leq \sup_{(\alpha_k) \in B_{q'}^n} \Big\Vert \sum_{i=1}^n
|\alpha_k x_k |\,\Big\Vert_{X}\, \cdot \,\sup_{(\beta_k) \in B_{q}^n}
\Big\Vert \sum_{k=1}^n |\beta_k y_k^{*}| \,\Big\Vert_{Y^*},
\end{align*}
we conclude that $T\colon X \to Y$ is $(1,1)$-strongly
$(q,q')$-concave operator.

\vspace{2 mm}

Before going on to results, let us observe the following fact. Suppose that
$T\colon X \to Y$ is an operator between Banach lattices such that $X$ is
$p_1$-convex and $Y$ is $p_{2}'$-concave with $1/p_2 + 1/p_2'=1$.
Then it follows from Theorem \ref{domi1} that $T$ is
$(p_1,p_2)$-strongly $(q_1,q_2)$-concave operator if and only if
there exist $C>0$ and probability measures $\nu_1 \in
\mathcal{M}(B_{(X_{p_1})^{*}})$ and $\nu_2 \in
\mathcal{M}(B_{((Y^*)_{p_2})^*})$ such that for every $(x, y^{*})
\in X \times Y^{*}$,
\begin{align*}
|\langle Tx, y^{*} \rangle| & \le C \bigg(
\int_{B_{{(X_{p_1})^{*}}}^+} \big\langle |x|^{p_1},\, x^{*}
\big\rangle^{{q_1}/{p_1}}\,d \nu_1(x^{*}) \bigg)^{1/{q_1}} \\
& \times \bigg( \int_{B_{((Y^{*})_{p_2})^*}^+} \big\langle
|y^{*}|^{p_2}, \, y^{**} \big\rangle^{{q_2}/{p_2}}\,d
\nu_2(y^{**}) \bigg)^{1/{q_2}}.
\end{align*}

We need the following lattice formula (see \cite[Proposition
12.6]{Schwarz}).

\vspace{2.5 mm}

\begin{proposition}
\label{latformula} Let $E$ be a~Banach lattice, $x_k \in E$
$(1\leq k\leq n)$, and $1\leq p\leq \infty$. Then
\[
\Big\|\Big(\sum_{k=1}^{n} |x_k|^p\Big)^{1/p}\Big\|_E = \sup
\bigg\{\sum_{k=1}^n \langle x_k, x_k^{*}\rangle ; \, x_k^{*}\in
E^{*}, \,\,\Big\|\Big(\sum_{k=1}^{n}
|x_k^{*}|^{p'}\Big)^{1/p'}\Big\|_{E^{*}} \leq 1\bigg\}.
\]
\end{proposition}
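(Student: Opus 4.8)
The plan is to prove the two inequalities separately. Write $u := \big(\sum_{k=1}^n |x_k|^p\big)^{1/p} \in E$, so that the left-hand side is $\|u\|_E$, and denote the right-hand supremum by $R$. For the bound $R \le \|u\|_E$ I would first establish a \emph{lattice Hölder inequality} for the duality pairing: for $x_k \in E$ and $x_k^* \in E^*$,
\[
\Big|\sum_{k=1}^n \langle x_k, x_k^*\rangle\Big| \le \sum_{k=1}^n \big\langle |x_k|, |x_k^*|\big\rangle \le \Big\langle \Big(\sum_{k=1}^n |x_k|^p\Big)^{1/p}, \Big(\sum_{k=1}^n |x_k^*|^{p'}\Big)^{1/p'}\Big\rangle .
\]
The first estimate is the elementary bound $|\langle x, x^*\rangle| \le \langle |x|, |x^*|\rangle$, valid in any Banach lattice. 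For the second I would invoke Krivine's homogeneous functional calculus: the scalar inequality $\sum_k a_k b_k \le (\sum_k a_k^p)^{1/p}(\sum_k b_k^{p'})^{1/p'}$ for nonnegative reals lifts to the present mixed $E$–$E^*$ pairing, the cleanest justification being to represent the finitely generated sublattice containing the $|x_k|$ on a space $C(K)$ and the associated functionals as measures, so that the estimate reduces to a pointwise Hölder inequality integrated against a measure. Combining this with $\langle u,v\rangle \le \|u\|_E\,\|v\|_{E^*}$ and the admissibility constraint $\|(\sum_k|x_k^*|^{p'})^{1/p'}\|_{E^*}\le 1$ gives $R \le \|u\|_E$.

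For the reverse bound $\|u\|_E \le R$ the idea is to construct optimal functionals. First choose, by Hahn–Banach, a positive norming functional $\phi \in E^*$ with $\|\phi\|=1$ and $\langle u, \phi\rangle = \|u\|_E$ (positivity can be arranged since $u \ge 0$). Since $|x_k|^p \le u^p$ gives $|x_k| \le u$ for every $k$, all the $x_k$ lie in the principal ideal $E_u = \{x : |x| \le \lambda u \text{ for some } \lambda\}$, which under the order-unit norm is an $AM$-space and hence, by Kakutani's theorem, lattice isometric to some $C(K)$ with $u \leftrightarrow \mathbf{1}$. Writing $x_k = f_k \in C(K)$ we have $\sum_k |f_k|^p = 1$ pointwise, and I set $g_k := \operatorname{sgn}(f_k)\,|f_k|^{p-1}$, so that $\sum_k f_k g_k = \sum_k |f_k|^p = 1$ and, using $(p-1)p'=p$, also $\big(\sum_k |g_k|^{p'}\big)^{1/p'} = \big(\sum_k |f_k|^p\big)^{1/p'} = 1$.

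The functionals are then obtained by multiplying $\phi$ by the $g_k$: I put $x_k^* := g_k\cdot\phi$, interpreting the bounded multiplier $g_k$ (with $|g_k|\le 1$) as acting on the band generated by $\phi$ in the Dedekind complete lattice $E^*$. With this choice $\sum_k \langle x_k, x_k^*\rangle = \big\langle \sum_k f_k g_k,\ \phi\big\rangle = \langle \mathbf 1, \phi\rangle = \langle u,\phi\rangle = \|u\|_E$, while the functional-calculus identity $(\sum_k |g_k|^{p'})^{1/p'} = 1$ forces $(\sum_k |x_k^*|^{p'})^{1/p'} = \phi$, so that $\|(\sum_k|x_k^*|^{p'})^{1/p'}\|_{E^*} = \|\phi\| = 1$. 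Hence $(x_k^*)$ is admissible and $R \ge \|u\|_E$, which together with the first part yields the asserted equality.

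I expect the genuine obstacle to be precisely this last step: giving rigorous meaning to the product $g_k\cdot\phi$ as an element of $E^*$ and checking that the $p'$-functional calculus of the resulting family recovers $\phi$ \emph{in $E^*$}, not merely on the ideal $E_u$. The clean route is to pass to the $AM$-representation $C(L)$ of the principal band generated by $\phi$ in $E^*$ (with $\phi \leftrightarrow \mathbf 1$), transport the multipliers $g_k$ there, and perform the scalar functional calculus inside $C(L)$; alternatively one extends each $g_k\phi$ from $E_u$ to $E^*$ by a Hahn–Banach argument dominated by $\phi$ and verifies the joint $p'$-bound simultaneously. The degenerate exponents $p=1$ (so $p'=\infty$, where $(\sum_k|x_k^*|^{p'})^{1/p'}$ is the lattice supremum $\bigvee_k |x_k^*|$ and $g_k=\operatorname{sgn}(f_k)$) and $p=\infty$ require only the obvious notational adjustments.
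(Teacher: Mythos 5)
The paper does not actually prove this proposition---it is quoted verbatim from Schwarz \cite[Proposition 12.6]{Schwarz} (the formula also appears in Lindenstrauss--Tzafriri)---so your blind proof is necessarily a different route: a self-contained argument versus a citation. Your outline is the standard proof of this classical duality and is essentially sound. For the inequality $R\le\|u\|_E$, the reduction of the mixed H\"older estimate to a pointwise one via the $C(K)$-representation of the ideal $E_u$ works; the one point to add is that the restriction map $E^*\to (E_u)^*$ is positive, and since $t\mapsto(\sum_k t_k^{p'})^{1/p'}$ is convex and positively homogeneous, Krivine's calculus of the restricted functionals is dominated by the restriction of the calculus computed in $E^*$, which is exactly the direction you need. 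For the reverse inequality the genuine obstacle is the one you flag, and it is worth being precise about why: $g_k=\operatorname{sgn}(f_k)\,|f_k|^{p-1}$ is in general only a bounded Baire function on $K$ (because of the sign), so ``$g_k\cdot\phi$'' cannot be formed by multiplication inside $C(K)$ or inside the $AM$-representation $C(L)$ of the ideal of $\phi$; the workable route is your second one: define $x_k^*$ on $E_u$ as the measure $g_k\,d\mu$, where $\mu=\phi|_{E_u}\in M(K)$, and extend to $E$ by the dominated (minimal) extension of functionals from an ideal, which is additive and compatible with the lattice operations. This yields $(\sum_k|x_k^*|^{p'})^{1/p'}\le\phi$ in $E^*$ rather than the equality you assert---the equality need not survive the extension---but the inequality is all that admissibility requires, and the value $\sum_k\langle x_k,x_k^*\rangle=\langle u,\phi\rangle$ is unaffected since each $x_k$ lies in $E_u$. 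Two further small caveats: $(E_u,\|\cdot\|_u)$ need only be dense in $C(K)$, so the identities among the $f_k,g_k$ should be read in the completion; and for $p=\infty$ the construction of the $g_k$ requires a measurable partition of $K$ subordinate to the sets $\{|f_k|=1\}$, which is a bit more than a notational adjustment. With these repairs your argument is a correct proof of the cited result.
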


\vspace{2.5 mm}

An application of the above proposition is the following corollary.

\vspace{2.5 mm}

\begin{corollary} \label{otrafor}
Let $1 \le p_1 < q$, $ 1 \le p_2 < q'$, and let $r_1$ and $r_2$ be
given by $1/r_1= 1/p_1-1/q$ and $1/r_2= 1/p_2-1/q'$. Assume that
$T\colon X\to Y$ is an operator between Banach lattices such that
$X$ is $p_1$-convex and $Y$ be $p'_2$-concave. If there exists
a~constant $C>0$ such that for every sequence $(x_k)_{k=1}^n$,
\[
\inf_{(\alpha_k) \in B_{r_2}^n} \Big\| \Big(\sum_{k=1}^n \Big|
\frac{T(x_k)}{\alpha_k} \Big|^{p'_2} \Big)^{1/p'_2} \Big\|_Y \le C
\sup_{(\beta_k) \in B_{r_1}^n} \Big\| \Big(\sum_{k=1}^n |\beta_k
x_k|^{p_1} \Big)^{1/p_1} \Big\|_{X},
\]
then $T$ is $(p_1,p_2)$-strongly $(q,q')$-concave.
\end{corollary}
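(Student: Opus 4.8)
The plan is to obtain the defining inequality of $(p_1,p_2)$-strongly $(q,q')$-concavity directly, using the lattice formula of Proposition~\ref{latformula} to rewrite the norm in $Y$ that appears in the hypothesis as a bilinear pairing against the functionals $y_k^*$. Fix finite sequences $(x_k)_{k=1}^n$ in $X$ and $(y_k^*)_{k=1}^n$ in $Y^*$, and put
\[
M := \sup_{(\beta_k)\in B_{r_2}^n}\Big\|\Big(\sum_{k=1}^n |\beta_k\,y_k^*|^{p_2}\Big)^{1/p_2}\Big\|_{Y^*}.
\]
One may assume $M>0$, since taking $(\beta_k)$ to be a unit coordinate vector shows that $M=0$ forces each $y_k^*=0$, in which case the desired inequality is trivial.

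The crucial step is to insert the scalars $\alpha_k$ coming from the infimum in the hypothesis. For each $(\alpha_k)\in B_{r_2}^n$ with all $\alpha_k\neq 0$, I would apply Proposition~\ref{latformula} to the Banach lattice $Y$ with the exponent $p_2'$, whose conjugate exponent is precisely $p_2$. The functionals $w_k:=\alpha_k\,y_k^*/M$ are then admissible test functionals, because
\[
\Big\|\Big(\sum_{k=1}^n |w_k|^{p_2}\Big)^{1/p_2}\Big\|_{Y^*} = \frac{1}{M}\Big\|\Big(\sum_{k=1}^n |\alpha_k\,y_k^*|^{p_2}\Big)^{1/p_2}\Big\|_{Y^*}\le 1,
\]
the last inequality holding since $(\alpha_k)\in B_{r_2}^n$ and by the very definition of $M$. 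Writing $z_k:=T(x_k)/\alpha_k$, Proposition~\ref{latformula} then gives
\[
\frac{1}{M}\sum_{k=1}^n \langle Tx_k, y_k^*\rangle = \sum_{k=1}^n \langle z_k, w_k\rangle \le \Big\|\Big(\sum_{k=1}^n \big|T(x_k)/\alpha_k\big|^{p_2'}\Big)^{1/p_2'}\Big\|_Y.
\]

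Since the left-hand side is independent of $(\alpha_k)$, I would take the infimum over $(\alpha_k)\in B_{r_2}^n$ on the right, obtaining
\[
\frac{1}{M}\sum_{k=1}^n \langle Tx_k, y_k^*\rangle \le \inf_{(\alpha_k)\in B_{r_2}^n}\Big\|\Big(\sum_{k=1}^n \big|T(x_k)/\alpha_k\big|^{p_2'}\Big)^{1/p_2'}\Big\|_Y,
\]
and then the hypothesis bounds the right-hand side by $C\sup_{(\beta_k)\in B_{r_1}^n}\big\|\big(\sum_{k=1}^n |\beta_k x_k|^{p_1}\big)^{1/p_1}\big\|_X$. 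Multiplying through by $M$ and recalling its value bounds $\sum_k\langle Tx_k,y_k^*\rangle$ by the required product; replacing $(y_k^*)$ by $(-y_k^*)$ (which leaves $M$ unchanged) controls the absolute value as well, yielding exactly the $(p_1,p_2)$-strongly $(q,q')$-concavity inequality.

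The point that needs care is the bookkeeping with the scalars $\alpha_k$: the identity $\langle Tx_k,y_k^*\rangle=\langle T(x_k)/\alpha_k,\alpha_k y_k^*\rangle$ requires $\alpha_k\neq 0$, and one must verify that restricting the infimum to $\alpha_k\neq 0$ does not change its value. This is settled by the usual convention that an index with $\alpha_k=0$ and $T(x_k)\neq 0$ contributes $+\infty$ to the norm, so that such configurations never compete for the infimum (while indices with $T(x_k)=0$ are irrelevant); alternatively a small perturbation argument suffices. I note that the $p_1$-convexity of $X$ and the $p_2'$-concavity of $Y$ are not actually invoked in this argument and serve only to link the statement with the preceding discussion and with Theorem~\ref{domi1}.
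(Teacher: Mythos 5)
Your proof is correct and follows essentially the same route as the paper's: both hinge on Proposition \ref{latformula} applied in $Y$ with exponent $p_2'$ (conjugate $p_2$), the transfer of the weights $\alpha_k$ from the functionals $y_k^*$ to the images $T(x_k)$, and then taking the infimum over $(\alpha_k)\in B_{r_2}^n$ before invoking the hypothesis. Your normalization by $M$ instead of restricting to normalized families $(y_k^*)$, and your explicit treatment of the $\alpha_k=0$ degeneracy (which the paper glosses over), are only presentational differences.
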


\vspace{2 mm}

\begin{proof}
From Proposition \ref{latformula}, it follows that it is
enough to show that

\begin{align*}
\sup & \bigg\{\Big|\sum_{k=1}^n \langle T(x_k),y_k^* \rangle\Big|;
\, \,y_k^{*} \in Y^{*}, \,\, \sup_{(\gamma_k) \in B_{r_2}^n}
\Big\Vert\Big(\sum_{i=1}^n |\gamma_k y_k^{*}
|^{p_2} \Big)^{1/{p_2}}\,\Big\Vert_{Y^*} \le 1 \bigg\} \\
& \le C \sup_{(\beta_k) \in B_{r_1}^n} \Big\Vert\Big(\sum_{k=1}^n
|\beta_k x_k|^{p_1} \Big)^{1/{p_1}}\,\Big\Vert_{X}
\end{align*}
for every choice of  finite sequences $(x_k)_{k=1}^n$ in $X$ and
$(y_{k}^{*})_{k=1}^n$ in $Y^{*}$.

Assume that $p_2 >1$; the proof for $p_2=1$ is the same with the
obvious changes in the computations. Fix now $(\alpha_k)$ in $B_{r_2}^n$.
We have
\begin{align*}
\sup & \bigg\{\Big|\sum_{k=1}^n \langle T(x_k),y_{k}^*
\rangle\Big| ; \, \,y_{k}^{*} \in Y^{*}, \, \, \sup_{(\gamma_k)
\in B_{r_2}^n} \Big\Vert\Big(\sum_{k=1}^n |\gamma_k y_k^{*}|^{p_2}
\Big)^{1/{p_2}}\,\Big\Vert_{Y^*} \le 1 \bigg\} \\
& \leq \sup \bigg\{\Big|\sum_{k=1}^n \langle T(x_k), y_k^{*}
\rangle\Big|;\, \,y_{k}^{*} \in Y^{*}, \, \, \,
\Big\Vert\Big(\sum_{k=1}^n |\alpha_k y_k^{*} |^{p_2}
\Big)^{1/{p_2}}\,\Big\Vert_{Y^*} \le 1 \bigg\}.
\end{align*}
Thus, we get that
\begin{align*}
\sup & \bigg\{\Big|\sum_{k=1}^n \langle T(x_k),y_k^* \rangle\Big|
;\, \, \sup_{(\gamma_k) \in B_{r_2}^n} \Big\Vert\Big(\sum_{k=1}^n
|\gamma_k y_k^{*} |^{p_2} \Big)^{1/{p_2}}\,\Big\Vert_{Y^*} \le 1
\bigg\} \\
& \leq \inf_{(\alpha_k) \in B_{r_2}^n} \Big\| \Big(\sum_{k=1}^n
\Big| \frac{T(x_k)}{\alpha_k} \Big|^{p'_2} \Big)^{1/p'_2}
\Big\|_{Y}.
\end{align*}
Combining with Proposition \ref{latformula}, this completes the proof.
\end{proof}

\vspace{2 mm}

Note that Corollary \ref{otrafor} allows to show that some classical
operators are $(p_1,p_2)$-strongly $(q_1,q_2)$-concave.
Consider the following example. Let $([0,1],\mathcal{B}, \mu)$ be
Lebesgue measure space, and let $(A_k)_{k=1}^{\infty}$ be the
decreasing sequence of the intervals $A_{k}:=[0,1/2^{k-1}]$ for
each $k\in \mathbb{N}$. Consider the \textit{integral evaluation
operator} $T\colon L^1[0,1] \to \ell^\infty$ given by
\[
T(x):= \Big(\int_{A_k} x\,d\mu \Big)_k, \quad\, x \in L^1[0,1].
\]
We claim that $T$ satisfies the assumptions of Corollary
\ref{otrafor} with $q=2=q'$, $p_1=p_2=1$ and $r_1=r_2 =2$. To see
this fix a~finite set $\{x_1,...,x_n\}$ in $L^1[0,1]$ and define
the following constants,
\[
\alpha_{0,i} := \frac{ \int_{[0,1]} |x_i|\,d\mu}{\big(
\sum_{i=1}^n \big( \int_{[0,1]} |x_i|\,d\mu \big)^2 \big)^{1/2}},
\quad 1\leq i \leq n.
\]
Note that $\big( \sum_{i=1}^n \alpha_{0,i}^2 \big)^{1/2} =1$. Then
\begin{align*}
& \inf_{(\alpha_i) \in B_{2}^n} \Big\| \sup_{1 \leq i \leq n}
\Big|\frac{Tx_i}{\alpha_i}\Big| \Big\|_{\ell^\infty}  \le \Big\|
\sup_{1 \leq i \leq n} \Big|\frac{(\int_{A_k} x_i\,d\mu)_k
}{\alpha_{i,0}} \Big| \Big\|_{\ell^\infty} \\
& \le \Big\| \sup_{1\leq i \leq n} \Big|\Big(\frac{ \int_{A_k}
x_i\,d \mu}{\int_{[0,1]} |x_i|\,d\mu} \Big)_k\Big| \,\Big(
\sum_{i=1}^n \Big(\int_{[0,1]} |x_i|\,d \mu \Big)^2 \Big)^{1/2}
\Big\|_{\ell^\infty} \\
& \le \Big( \sum_{i=1}^n\Big( \int_{[0,1]} |x_i|\,d\mu \Big)^2
\Big)^{1/2} \le \sup_{(\beta_i) \in B_{2}^n} \sum_{i=1}^n
|\beta_i | \int_{[0,1]} |x_i|\,d\mu \\
& = \sup_{(\beta_i) \in B_{2}^n} \big\| \sum_{i=1}^n |\beta_i x_i|
\big\|_{L^1[0,1]}.
\end{align*}
Thus, Corollary \ref{otrafor} applies and so $T$ is $(1,1)$-strongly
$(2,2)$-concave.

\vspace{2 mm}

\begin{theorem} \label{factolin}
Let $1 \le p_1, p_2, q_1, q_2$ be real numbers  such that $p_1 \le
q_1$ and $p_2 \le q_2=q'_1$. Let $(\Omega,\Sigma,\mu)$ be a finite
measure space. Let $X$ be an order continuous $p_1$-convex Banach
function space and $Y$  a~$p_{2}'$-concave order continuous Banach
function lattice with the Fatou property, where $1/p_2 +
1/p'_2=1$. Assume that $Y'$ is also order continuous. The following
statements about an operator $T\colon X\to Y$ are equivalent.
\begin{itemize}
\item[(i)] $T$ is $(p_1,p_2)$-strongly $(q_1,q_2)$-concave.
\item[(ii)] There is a~constant $C>0$ such that for every choice
of $(x_k)_{k=1}^n$ in $X$ and $(y_{k}^{*})_{k=1}^n$ in $Y^{*}$,
\[
\sup \bigg\{\Big|\sum_{k=1}^n \langle T(x_k), y_{k}^{*}
\rangle\Big| :\, \, \sup_{(\alpha_k) \in B_{r_2}^n}
\Big\Vert\Big(\sum_{k=1}^n |\alpha_k y_{k}^{*}|^{p_2}
\Big)^{1/{p_2}}\,\Big\Vert_{Y^*} \le 1 \bigg\}
\]
\[
\le C \sup_{(\beta_k) \in B_{r_1}^n} \Big\Vert\Big(\sum_{k=1}^n
|\beta_k x_k|^{p_1} \Big)^{1/{p_1}}\,\Big\Vert_{X}.
\]
\item[(iii)] $T$ admits the factorization
\[
\xymatrix{{X} \ar[r]^{ T} \ar@{->}[d]_{\iota_X}  & Y \,\,\,. \\
S^{q_1}_{X_{p_1}}(\nu_1) \ar[r]_{ \,\,\, T_0}  \,\,\ & (
S^{q_2}_{Y'_{p_2}}(\nu_2))'  \ar[u]_{(\iota_Y')'}  & }
\]
\end{itemize}
\end{theorem}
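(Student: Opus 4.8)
The plan is to work throughout with the bilinear form $B\colon X\times Y^{*}\to\mathbb{R}$, $B(x,y^{*}):=\langle Tx,y^{*}\rangle$, and to exploit two structural facts: since $Y$ is order continuous we have $Y^{*}=Y'$, and since $Y$ is $p_2'$-concave its dual $Y'$ is $p_2$-convex, so both $X$ and $Y'$ are order continuous Banach function lattices to which Lemma \ref{S-space}(ii) applies. I would prove (i) $\Leftrightarrow$ (ii), then (i) $\Rightarrow$ (iii), then (iii) $\Rightarrow$ (i).

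For (i) $\Leftrightarrow$ (ii) I would argue purely by homogeneity, exactly as in Corollary \ref{otrafor}. Normalizing the $Y^{*}$-factor to be $\le 1$ converts the defining product inequality of $(p_1,p_2)$-strong $(q_1,q_2)$-concavity into the supremum estimate of (ii). Conversely, replacing each $y_k^{*}$ by $\varepsilon_k y_k^{*}$ with $\varepsilon_k=\operatorname{sign}\langle Tx_k,y_k^{*}\rangle$ leaves the right-hand side unchanged (it depends only on $|y_k^{*}|$) and turns $\sum_k\langle Tx_k,\varepsilon_k y_k^{*}\rangle$ into $\sum_k|\langle Tx_k,y_k^{*}\rangle|\ge\big|\sum_k\langle Tx_k,y_k^{*}\rangle\big|$, recovering (i); Proposition \ref{latformula} is what justifies reading the supremum in (ii) as the $Y^{*}$-factor.

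For (i) $\Rightarrow$ (iii) the key observation is that $B$ is a scalar-valued bilinear $(p_1,p_2)$-strongly $(q_1,q_2)$-concave operator and that $1/q_1+1/q_2=1/q_1+1/q_1'=1$, so $q=1$. Applying Theorem \ref{domi1} to $B$ (this is exactly the observation preceding Proposition \ref{latformula}) yields probability measures $\nu_1\in\mathcal{M}(B_{(X_{p_1})^{*}})$ and $\nu_2\in\mathcal{M}(B_{(Y'_{p_2})^{*}})$ with $|\langle Tx,y'\rangle|\le C\,\|x\|_{p_1,q_1,\nu_1}\,\|y'\|_{p_2,q_2,\nu_2}$ for all $(x,y')\in X\times Y'$. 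Invoking Lemma \ref{S-space}(ii) for $X$ and for $Y'$ (after its adjustment of measures) replaces these seminorms by the genuine order continuous lattice norms of $S^{q_1}_{X_{p_1}}(\nu_1)$ and $S^{q_2}_{Y'_{p_2}}(\nu_2)$, so the inequality becomes $|B(\iota_X x,\iota_{Y'}y')|\le C'\,\|\iota_X x\|\,\|\iota_{Y'}y'\|$. Hence $B$ extends to a bounded bilinear form on $S^{q_1}_{X_{p_1}}(\nu_1)\times S^{q_2}_{Y'_{p_2}}(\nu_2)$, which I encode as a bounded linear operator $T_0\colon S^{q_1}_{X_{p_1}}(\nu_1)\to \big(S^{q_2}_{Y'_{p_2}}(\nu_2)\big)^{*}$; order continuity of $S^{q_2}_{Y'_{p_2}}(\nu_2)$ lets me replace its dual by its Köthe dual, so $T_0$ lands in $\big(S^{q_2}_{Y'_{p_2}}(\nu_2)\big)'$. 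Commutativity of the diagram then follows from the identity $\langle (\iota_{Y'})'T_0\iota_X x,\,y'\rangle=\langle T_0\iota_X x,\,\iota_{Y'}y'\rangle=B(\iota_X x,\iota_{Y'}y')=\langle Tx,y'\rangle$, valid for all $y'\in Y'$, together with the fact that $Y'=Y^{*}$ separates the points of $Y$. The delicate point — and the reason all the hypotheses are present — is the duality bookkeeping in the target of $(\iota_{Y'})'$: order continuity of $Y'$ gives $(Y')^{*}=Y''$ and the Fatou property gives $Y''=Y$, so the Köthe adjoint of $\iota_{Y'}\colon Y'\to S^{q_2}_{Y'_{p_2}}(\nu_2)$ indeed maps $\big(S^{q_2}_{Y'_{p_2}}(\nu_2)\big)'$ into $Y$.

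For (iii) $\Rightarrow$ (i) I would read the domination off the factorization: $|\langle Tx,y'\rangle|=|B(\iota_X x,\iota_{Y'}y')|\le\|T_0\|\,\|\iota_X x\|\,\|\iota_{Y'}y'\|$, where the two $S$-space norms are equivalent to $\|\cdot\|_{p_1,q_1,\nu_1}$ and $\|\cdot\|_{p_2,q_2,\nu_2}$. Passing to finite sequences, applying Hölder's inequality with the conjugate exponents $q_1,q_2$, and using that $\nu_1,\nu_2$ are probability measures gives $\sum_k|\langle Tx_k,y_k^{*}\rangle|\le\|T_0\|\big(\sup_{x^{*}}\sum_k\langle|x_k|^{p_1},x^{*}\rangle^{q_1/p_1}\big)^{1/q_1}\big(\sup_{y^{**}}\sum_k\langle|y_k^{*}|^{p_2},y^{**}\rangle^{q_2/p_2}\big)^{1/q_2}$. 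A final application of Lemma \ref{lemaequi1}, once for $X$ and once for $Y'=Y^{*}$, rewrites each factor as the supremum $\sup_{(\beta_k)\in B_r^n}\big\|\big(\sum_k|\beta_k\cdot_k|^{p}\big)^{1/p}\big\|$, which is precisely the defining inequality of (i).
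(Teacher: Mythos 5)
Your proposal is correct and follows essentially the same route as the paper: (i)$\Leftrightarrow$(ii) by homogeneity (the paper cites Corollary \ref{otrafor} for this), and (i)$\Rightarrow$(iii) by treating $\langle Tx,y'\rangle$ as a bilinear form on $X\times Y'$, applying the multilinear domination/factorization machinery (the paper routes this through Theorem \ref{prifa} and its corollary, which you simply unpack into Theorem \ref{domi1} plus Lemma \ref{S-space}(ii)), and then doing the same K\"othe-duality bookkeeping with $(Y')'=Y''=Y$. Your explicit verification of the converse and of the commutativity of the diagram only fills in steps the paper declares obvious.
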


\vspace{2 mm}

\begin{proof}
The equivalence between (i) and (ii) is just given by Corollary
\ref{otrafor}. Let us show the equivalence of (i) and (iii).
Applying Corollary \ref{prifa}, we conclude that (i) implies that
the~bounded bilinear form on $X\times Y'$ given by
\[
(x, y') \mapsto \langle T(x), y'\rangle, \quad\, (x, y') \in X
\times Y',
\]
admits a bilinear continuous extension from the product
$S^{q_1}_{X_{p_1}}(\nu_1) \times S^{q'_2}_{(Y')_{p'_2}}(\nu_2)$ of
Banach function lattices for some probability Borel measure
spaces, i.e., there exists a~continuous bilinear form $S\colon E
\times F \to \mathbb{R}$ such that
\[
S(i_X(x),i_{Y'}(y')) = \langle T(x), y' \rangle, \quad\, (x, y') \in
X\times Y',
\]
where $E:= S^{q_1}_{X_{p_1}}(\nu_1)$, $F:= S^{q'_2}_{(Y')_{p'_2}}(\nu_2)$ and
\[
\iota_X \colon X \to E, \quad\, \iota_{Y'} \colon Y' \to F
\]
are continuous inclusions.

The required factorization follows then by using standard
arguments. At first we observe that for any fixed $f\in E$ the
formula $\langle T_0(f), \cdot \rangle := S(f,\cdot)$ defines
a~continuous functional on $F$ with
\[
\sup_{g \in F} |\langle T_0(f), g\rangle| = \sup_{g \in F} |S(f,g)|
\le \|S\| \|f\|_E\,.
\]
This clearly implies that  $T_{0}\colon E \to F$ is a~bounded linear
operator with $\|T_0\| \le \|S\|$.

Since $Y$ is $p'_2$-concave, $Y'$ is $p_2$-convex. Our
assumption on $Y'$ yields that of $(Y')_{p_2}$ is also order
continuous. Consequently, we have that the K\"othe adjoint of the
inclusion $\iota_{Y'}\colon Y' \to F$ appearing in the
factorization given by Corollary \ref{prifa} for the bilinear map
can be considered,
\[
(\iota_{Y'})' \colon F'  \to (Y')'.
\]
Combining the K\"othe duality with $Y''=Y$ (by the Fatou property)
yields the required factorization shown in (iii). The converse is
obvious.
\end{proof}

\vspace{3 mm}

\end{document}